\renewcommand{\leq}{\leqslant}
\renewcommand{\geq}{\geqslant}
\DeclareMathOperator{\R}{\mathbb{R}}
\newcommand{\Log}{{\mathscr{L}}} % a real-valued logic
\newcommand{\Mog}{\mathscr{M}} % a many-valued logic
\newcommand{\provesL}{{\vdash_{\Log}}} % its consequence relation
\newcommand{\nprovesL}{{{\not \vdash}_{\Log}}} % its consequence relation
\newcommand{\BL}{{\sf BL}} %Basic logic
\newcommand{\G}{{\sf G}} % Goedel logic
\newcommand{\LL}{{\sf L}} % Luk logic
\newcommand{\PP}{{\sf P}} % Product logic
\newcommand{\Var}{\textup{{\sc Var}}}
\renewcommand{\Form}{\textup{{\sc Form}}}
\newcommand{\ie}{\textit{i.e.}}
\newcommand{\cf}{\textit{cf.}}
\newcommand{\eg}{\textit{e.g.}}
\newcommand{\princ}[2]{\\[6pt] {\bf #1.}\,\,{#2}}
\renewcommand{\P}[1]{{\textup{P#1}}}
\theoremstyle{theorem}
\newtheorem*{theoremI}{Theorem I}
\newtheorem*{theoremII}{Theorem II}
\newtheorem*{theoremIII}{Theorem III}
\newtheorem{theorem}{Theorem}[section]
\newtheorem{lemma}[theorem]{Lemma}
\newtheorem{corollary}[theorem]{Corollary}
\newtheorem*{corollarynonum}{Corollary}
\newtheorem{claim}[theorem]{Claim}
\theoremstyle{definition}
\newtheorem{remark}[theorem]{Remark}
\begin{document}
\begin{frontmatter}
\title{Two principles in many-valued logic\\[12pt]{\normalsize \sl For Petr H\'ajek}}
\author{Stefano Aguzzoli}%\corref{cor}}
\ead{aguzzoli@di.unimi.it}
\address{Dipartimento di Informatica, Universit\`a degli
Studi di Milano. Via Comelico 39--41, 20135 Milano, Italy}
\author{{Vincenzo Marra}\corref{cor}}
\ead{vincenzo.marra@unimi.it}
\address{Dipartimento di Matematica ``Federigo Enriques'', Universit\`a degli
Studi di Milano. Via Cesare Saldini 50, 20133 Milano, Italy}
\journal{H\'ajek's Festschrift. Revised version submitted}

\cortext[cor]{Corresponding author.}
%%% Keywords
\begin{abstract}Classically, two propositions are logically equivalent precisely when they are true under the same logical valuations. Also, two
logical valuations are distinct if, and only if, there is a formula that is true according to one valuation, and false according to the other. By a \emph{real-valued logic} we mean a many-valued logic in the sense of Petr H\'ajek that is complete with respect to a subalgebra of truth values of a BL-algebra given by a continuous triangular norm on $[0,1]$.  Abstracting the two foregoing properties from classical logic leads us to two principles that a real-valued logic  may or may not satisfy. We prove that the two principles are sufficient to characterise {\L}ukasiewicz and  G\"odel logic,  to within extensions. We also prove that, under the additional assumption that the set of truth values be closed in the Euclidean topology of $[0,1]$, the two principles also afford a characterisation of Product logic.
\end{abstract}
\end{frontmatter}

\section{Prologue.}\label{s:pro}
At the outset of his landmark monograph \cite{hajek}, Petr H\'ajek writes:
\begin{quote}{\sl There are various systems of fuzzy logics, not just one. We have one basic logic \textup{(BL)} and
three of its most important extensions: {\L}ukasiewicz logic, G\"odel logic, and the product logic.} \cite[p.5]{hajek}.
\end{quote}
\emph{Basic Logic}  is, of course, the creation of H\'ajek himself. One of its several virtues is to afford metamathematical
comparison of  many-valued  logics to an unprecedented degree of clarity. Our paper is intended as a  modest contribution to such comparative studies; it will soon transpire that it would have been impossible to
write it, in the possible but unfortunate worlds orphan of \cite{hajek}.

\smallskip We assume familiarity with Basic (propositional) Logic, triangular norms (\emph{t-norms}, for short), and BL-algebras; see \cite{hajek}, and Section \ref{s:pre} for an outline. Note that in this paper `t-norm' means  `continuous t-norm', for the sake of brevity. We write
 $\Form$ for the set of formul\ae\  over the countable collection of propositional variables $\Var:=\{X_1,X_2,\ldots\}$, with primitive connectives $\to$ (implication), $\&$ (monoidal conjunction), and $\bot$ (\textit{falsum}). As usual, $\&$ is semantically interpreted by a t-norm, $\to$ by its residuum, and $\bot$ by $0$. We adopt the standard abbreviations, $\neg \alpha:=\alpha \to \bot$, $\alpha \wedge \beta:=\alpha \& (\alpha\to\beta) $, $\alpha \vee \beta:=((\alpha\to\beta)\to \beta)\wedge((\beta \to \alpha)\to \alpha)$,  and $\alpha \leftrightarrow \beta := (\alpha \to \beta) \& (\beta \to \alpha)$. We write \BL\ to denote Basic Logic, as axiomatised  in \cite{hajek, cignolietal_bl}. An \emph{extension} of \BL\ is a   collection of formul\ae\ closed under the (syntactic) consequence relation of \BL, and closed under substitutions. If $\Mog$ is an extension of \BL, we always tacitly assume that $\Mog$ is consistent,   we refer to $\Mog$ as a \emph{many-valued logic}, and we denote by ${\vdash_{\Mog}}$  its consequence relation.

\emph{{\L}ukasiewicz logic}, denoted \LL, is obtained by extending \BL\ with the axiom schema $\neg\neg\varphi \to \varphi$. \emph{G\"odel logic}, denoted \G, is obtained by adding to \BL\ the schema $\varphi \to (\varphi\&\varphi)$. To obtain \emph{Product logic}, written \PP, one extends \BL\ with $\neg \varphi \vee ((\varphi\to(\varphi \&\psi))\to \psi)$. See \cite[p.63, Definitions 4.2.1 and 4.1.1]{hajek}, and \cite[Chapter I]{handbook1}.

Over the real unit interval $[0,1]\subseteq \R$, consider  a BL-algebra $([0,1],*,\to_*,0)$, where  $*\colon [0,1]\times [0,1]\to[0,1]$ is a continuous t-norm  with  residuum $\to_{*}$. By an \emph{algebra of truth values} we shall mean a subalgebra $T_{*}$ of some such BL-algebra $([0,1],*,\to_*,0)$. Note, in particular, that $\{0,1\}$ is a subset of any algebra of truth values. We write $T_{*}\subseteq [0,1]$ for the underlying set of the algebra of truth values, too, \ie\ for the set of truth values itself.

 We say that the pair $(\Log, T_*)$ is a \emph{real-valued logic} if $\Log$ is an extension of \BL\ that   is complete with respect to valuations $\mu\colon \Form \to T_{*}$ into the given algebra of truth values. Any algebra of truth values $T'_*$ such that $(\Log,T'_*)$ is a real-valued logic is said to \emph{induce} $\Log$. When $T_{*}=[0,1]$, we also say that $\Log$ is induced by the t-norm $*$. (This   makes sense, recalling that $\to_{*}$ is uniquely determined by $*$. See Section \ref{s:pre} below.)  Distinct algebras of truth values may of course induce the same  logic $\Log$, \ie\ the same extension of \BL. When we write that $\Log$ is a real-valued logic, with no reference to $T_*$, we mean that there is at least one algebra of truth values $T_*$ that induces $\Log$. 
 
\smallskip With this machinery in place, we consider two   principles that a real-valued logic $\Log$ may or may not satisfy.
\smallskip
\princ{\P{1}}{For every algebra $T_*$ of truth values inducing $\Log$, the following holds. For each  $\alpha, \beta \in\Form$,  we have $\vdash_{\Log} \alpha \leftrightarrow \beta$ if, and only if,
\begin{align*}
\mu(\alpha) = 1 \quad  \Longleftrightarrow  \quad \mu(\beta) = 1
\end{align*}
holds for each valuation $\mu \colon \Form \to T_{*}$.\qed\\}
\princ{\P{2}}{For every algebra  $T_*$ of truth values inducing $\Log$, the following holds. For each pair of valuations $\mu, \nu \colon \Form \to T_{*}$, if $\mu\neq \nu$ then
there is a formula $\alpha\in \Form$ such that $\mu(\alpha) >0$ while $\nu(\alpha) = 0$.\qed\\}

\noindent Our first two results are that \P{1} and \P{2} are characteristic of $\G$ and $\LL$, respectively, to within extensions.
\begin{theoremI}A real-valued logic $(\Log,T_*)$ satisfies $\P{1}$ if, and only if, $\Log$ is an extension of G\"odel logic.
\end{theoremI}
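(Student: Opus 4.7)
My plan is to split the biconditional. For the forward direction, I would take the single witness pair $\alpha:=X_1$ and $\beta:=X_1\&X_1$: in any BL-chain on $[0,1]$ the inequality $x*x\le x$ forces $x*x=1\Leftrightarrow x=1$, so $\mu(\alpha)=1\Leftrightarrow\mu(\beta)=1$ for every valuation $\mu$ into any $T_*$ inducing $\Log$. Principle \P{1} then delivers $\vdash_\Log X_1\to(X_1\&X_1)$, and closure under substitution promotes this to the characteristic G\"odel schema, so $\Log$ is an extension of \G.

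Conversely, assume $\Log\supseteq\G$ and let $T_*$ induce $\Log$. By completeness, $\vdash_\Log \varphi\leftrightarrow\varphi\&\varphi$ forces every $a\in T_*$ to be $*$-idempotent. Working inside $T_*$: for $a\le b$ in $T_*$, both idempotent, $a=a*a\le a*b\le a*1=a$, so $*$ restricted to $T_*$ coincides with $\min$; and for $x>y$ in $T_*$ the residuum is computed as $\max\{z\in T_*:\min(x,z)\le y\}=y$, which is precisely G\"odel's implication. So $T_*$ sits as a subchain of the standard G\"odel algebra on $[0,1]$ containing $\{0,1\}$. By completeness of $\Log$ with respect to $T_*$, verifying \P{1} for $\Log$ reduces to the claim that any two formulas agreeing on $1$-loci across $T_*$-valuations also agree pointwise on such valuations.

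For this last step I argue contrapositively. Suppose some $\mu_0$ witnesses $\mu_0(\alpha)<\mu_0(\beta)=:c$; the $1$-locus hypothesis forces $c<1$. The central device is the truncation $f\colon[0,1]\to[0,1]$ given by $f(x)=1$ for $x\ge c$ and $f(x)=x$ otherwise. A direct case analysis (on $x\le y$, on $x>y\ge c$, and on $x>y$ with $y<c$) shows that $f$ preserves $\min$, G\"odel's implication, $0$, and $1$; moreover $c\in T_*$ guarantees $f(T_*)\subseteq T_*$. Hence $\mu_1:=f\circ\mu_0$ is a $T_*$-valuation, and a straightforward induction on formula complexity yields $\mu_1(\gamma)=f(\mu_0(\gamma))$ for every $\gamma$. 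Specialising gives $\mu_1(\beta)=f(c)=1$ while $\mu_1(\alpha)=\mu_0(\alpha)<c<1$, contradicting the $1$-locus hypothesis. I expect the main obstacle to lie in the second paragraph, namely certifying that up to the ambient t-norm $T_*$ really is a G\"odel subchain; once that is in hand the truncation $f$ is the natural separator that converts any pointwise discrepancy into a $1$-locus discrepancy, which is exactly the leverage \P{1} requires.
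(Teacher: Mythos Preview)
Your argument is correct. The forward direction is essentially the paper's: both exhibit the witness pair $X_1$ and $X_1\&X_1$, note that their $1$-loci coincide in any algebra of truth values, and conclude (directly or contrapositively) that \P{1} forces the idempotence schema.

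The backward direction, however, takes a genuinely different route. The paper argues meta-logically: once $T_*$ is seen to be a G\"odel subchain, agreement of $1$-loci is rephrased as mutual semantic consequence $\alpha\vDash_{T_*}\beta$ and $\beta\vDash_{T_*}\alpha$; then \emph{strong} completeness of $\G$ (and of each of its finite-valued extensions) converts this into mutual syntactic consequence, and the Deduction Theorem for $\G$ yields $\vdash_\Log\alpha\to\beta$ and $\vdash_\Log\beta\to\alpha$. Your approach is purely semantic and more self-contained: the truncation $f$ at the level $c=\mu_0(\beta)$ is an explicit endomorphism of any G\"odel subchain containing $c$, and composing with it promotes a pointwise discrepancy into a $1$-locus discrepancy. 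This sidesteps both the strong-completeness results (cited in the paper from \cite{Distinguished}) and the Deduction Theorem, at the cost of a small case analysis verifying that $f$ preserves $\min$ and the G\"odel residuum. Your argument thus shows that the hard direction of Theorem~I is really an order-theoretic fact about G\"odel chains, while the paper's proof highlights the underlying proof-theoretic features (strong completeness, deduction) that drive it.
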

\begin{theoremII}A real-valued logic $(\Log,T_*)$ satisfies $\P{2}$ if, and only if, $\Log$ is an  extension of {\L}ukasiewicz logic.
\end{theoremII}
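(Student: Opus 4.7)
The proof splits into two implications.

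For $(\Leftarrow)$, if $\Log$ extends Łukasiewicz, then any $T_*$ inducing $\Log$ must satisfy $\neg\neg x = x$ for all $x \in T_*$ (apply the Łukasiewicz axiom to a valuation that sends $X_1$ to the given $x$ and every other variable to $0$). So $T_*$ is a sub-MV-algebra of the standard Łukasiewicz MV-algebra on $[0,1]$, and in particular simple. For each valuation $\mu \colon \Form \to T_*$ the set $\{\alpha : \mu(\alpha) = 0\}$ is therefore a maximal ideal of the Lindenbaum algebra; and by a classical MV-algebra result (Hölder-type rigidity of $[0,1]$) any homomorphism into the Łukasiewicz MV-algebra $[0,1]_{\LL}$ is determined by its kernel. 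Distinct valuations $\mu \ne \nu$ thus produce distinct, and hence incomparable, maximal ideals, which gives a witness $\alpha$ for \P{2}.

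For $(\Rightarrow)$, I argue the contrapositive. Assume $\Log$ does not extend Łukasiewicz, so $\nprovesL \neg\neg\varphi \to \varphi$ for some $\varphi$. By the completeness of $\Log$ with respect to $T_*$, there is $x \in T_*$ with $\neg\neg x > x$. The ordinal-sum classification of continuous t-norms says the $\neg\neg$-fixed points of a BL-chain form $\{0,1\} \cup C_1$, where $C_1$ is the bottom Łukasiewicz component (possibly empty), and $\neg\neg x$ is the least fixed point above $x$; hence $\neg\neg x = 1$, $\neg x = 0$, and $0 < x < 1$, with both $x$ and $1$ in $T_*$.

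To violate \P{2} in $T_*$ I take the valuations $\mu, \nu \colon \Form \to T_*$ with $\mu(X_1) = x$, $\nu(X_1) = 1$ and $\mu(X_j) = \nu(X_j) = 0$ for $j \ne 1$, and show $\{\alpha : \mu(\alpha) = 0\} = \{\alpha : \nu(\alpha) = 0\}$. Set $U := \{a \in T_* : \neg a = 0\}$; it contains $x$ and $1$, and is upward closed because $\neg$ is order-reversing. The technical sub-claim is that $\{0\} \cup U$ is closed under $\&$ and $\to$, and that the collapse $\phi \colon \{0\} \cup U \to \{0,1\}$ defined by $\phi(0) = 0$ and $\phi|_U \equiv 1$ is a BL-homomorphism. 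Closure of $U$ under $\&$ uses associativity and residuation: if $\neg a = \neg b = 0$ then $(a * b) * z = a * (b * z) = 0$ forces $b * z \le \neg a = 0$, hence $z \le \neg b = 0$, so $\neg(a * b) = 0$. Closure under $\to$ follows from $a \to b \ge b$ together with the boundary cases $a = 0$ (giving $1$) and $b = 0$ (giving $\neg a$). Granted the sub-claim, induction on formula complexity places the image of $\mu$ inside $\{0\} \cup U$; then $\phi \circ \mu \colon \Form \to \{0,1\} \subseteq T_*$ is a homomorphism agreeing with $\nu$ on every variable, so $\phi \circ \mu = \nu$. Because $\phi$ sends only $0$ to $0$, the two kernels coincide although $\mu \ne \nu$.

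The main obstacle I anticipate is this sub-claim—that $\{0\} \cup U$ is closed under the BL operations and supports the collapse $\phi$. Its verification rests on a careful use of the ordinal-sum structure and residuation identities in continuous t-norms, but is essentially a routine computation once the framework is in place.
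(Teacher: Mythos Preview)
Your proof is correct, and both directions take a genuinely different route from the paper's. For $(\Leftarrow)$, the paper constructs explicit separating formul\ae\ via one-variable basic literals (a weak fragment of McNaughton's theorem, Claim~\ref{c:separation}), whereas you argue abstractly: since any $T_*$ inducing an extension of $\LL$ is isomorphic to a simple subalgebra of $T_\odot$, kernels of valuations are maximal ideals of the Lindenbaum MV-algebra, and H\"older rigidity forces distinct valuations to have distinct---hence incomparable---kernels. Your route is shorter and more conceptual; the paper's has the merit of producing an explicit witness formula. For $(\Rightarrow)$, the paper picks two constant valuations into a non-first ordinal summand and invokes Lemma~\ref{l:varseval} to show they share zero sets; you instead isolate a single element $x$ with $\neg x=0$, $0<x<1$, and compare $\mu(X_1)=x$ against $\nu(X_1)=1$ via the collapse homomorphism $\phi\colon\{0\}\cup U\to\{0,1\}$ where $U=\{a:\neg a=0\}$. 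The two arguments are close in spirit---your $U$ contains the non-first summands---but your packaging through the quotient map $\phi$ is self-contained and avoids the structural induction carried out in Lemma~\ref{l:varseval}. The sub-claim you flag as the main obstacle (closure of $\{0\}\cup U$ under $\&,\to$ and the homomorphism property of $\phi$) is indeed routine, and the verification you sketch is correct.
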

\begin{remark}
Observe that the two preceding theorems show that in \P{1} and \P{2} one can safely replace the initial universal quantification by an existential one. In other words, the principles \P{1} and \P{2} display robustness with respect to the specific choice of the algebra of truth values, \textit{salva logica} $\Log$.\qed
\end{remark}

\noindent We prove Theorem I in Section \ref{s:tarski}, and Theorem II in Section \ref{s:leibniz}, after some preliminaries in Section \ref{s:pre}.

\smallskip The question arises, can one also characterise Product logic by means of  general principles such as \P{1} and \P{2}. We shall show how to answer this question affirmatively, under one additional assumption. Let us say that the real-valued logic $\Log$ is \emph{closed} if there exists an algebra of truth values $T^*$ inducing $\Log$ such that the underlying set of $T^*$ is closed in the Euclidean topology of $[0,1]$.
Product logic is the unique \emph{closed} real-valued logic that \emph{fails} both \P{1} and \P{2} hereditarily with respect to real-valued extensions, in the following sense:
\begin{theoremIII}A closed real-valued logic $\Log$ is Product logic if, and only if, $\Log$ and all of its non-classical, real-valued extensions fail \P{1} and \P{2}.
\end{theoremIII}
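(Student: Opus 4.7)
The plan is to reduce Theorem III to Theorems I and II using the Mostert--Shields structure theorem for continuous t-norms. By those theorems, a real-valued logic fails both \P{1} and \P{2} precisely when it extends neither G\"odel logic $\G$ nor \L ukasiewicz logic $\LL$. Hence the condition in Theorem III becomes: neither $\Log$ nor any of its non-classical real-valued extensions is an extension of $\G$ or of $\LL$.

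For \emph{necessity}, $\PP$ itself extends neither $\G$ nor $\LL$, as the defining axioms of each fail in the standard product algebra on $[0,1]$. For extensions: any real-valued extension of $\PP$ is witnessed by a product chain, by H\'ajek's algebraic characterisation of product algebras. The G\"odel axiom $\varphi\to\varphi\&\varphi$ forces every element of such a chain to be idempotent, but product chains have only $0$ and $1$ as idempotents, hence the extension is classical. Similarly, the \L ukasiewicz axiom $\neg\neg\varphi\to\varphi$ imposes $\neg\neg x = x$, and in any product chain $\neg x = 0$ for every $x>0$, so the chain again collapses to $\{0,1\}$.

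For \emph{sufficiency}, let $T$ be a closed subalgebra of some standard BL-algebra $([0,1],*,\to_*,0)$ inducing $\Log$; non-classicality of $\Log$ forces $|T|\geq 3$. The set $T_{\mathrm{id}}$ of idempotents of $T$ is a BL-subalgebra of $T$ which is a G\"odel chain, since every continuous t-norm coincides with $\min$ on its set of idempotents. Were $|T_{\mathrm{id}}|\geq 3$, the logic of $T_{\mathrm{id}}$ would be a non-classical real-valued extension of $\Log$ extending $\G$, against the hypothesis; hence $T_{\mathrm{id}}=\{0,1\}$. By Mostert--Shields, $*$ decomposes as an ordinal sum with non-trivial summands $[a_i,b_i]$, each isomorphic to a rescaling of the \L ukasiewicz or the product t-norm; if $T$ meets such a summand, then for any $x \in T\cap(a_i,b_i)$ the sequence $x,x*x,x*x*x,\ldots$ either equals $a_i$ eventually (\L ukasiewicz case) or converges to $a_i$ (product case), and closedness of $T$ forces $a_i \in T_{\mathrm{id}}=\{0,1\}$, so $a_i=0$. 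Thus $T\setminus\{1\}$ is contained in the unique summand $[0,b]$ based at $0$. A case analysis verifies that the map fixing $T\cap[0,b)$ pointwise and sending $1$ to $b$ is a BL-isomorphism of $T$ onto a subalgebra of the rescaled summand on $[0,b]$; after further rescaling, this exhibits $T$ as (isomorphic to) a non-trivial subalgebra of the standard MV-algebra $[0,1]_L$ (if the summand is of \L ukasiewicz type) or of the standard product algebra $[0,1]_\Pi$ (if of product type). The \L ukasiewicz alternative is excluded, since it would make $\Log$ an extension of $\LL$. Hence $T$ is isomorphic to a non-trivial subalgebra of the standard product algebra, and standard completeness of $\PP$ with respect to any non-trivial product chain yields $\Log=\PP$.

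The principal obstacle will be the BL-isomorphism step: when $b<1$ one must verify that the top element $1\in T$ plays, under the ambient BL-operations, exactly the role that the summand-top $b$ would play in the sub-MV-algebra or sub-product-algebra of $[0,b]$; in particular, identities such as $x\to x=1$ and $x\to 1=1$ inside $T$ must correspond to $x\to x=b$ and $x\to b = b$ inside the summand. A secondary, more delicate point is the appeal to standard completeness of $\PP$ with respect to arbitrary non-trivial closed product chains, for which a precise reference must be supplied.
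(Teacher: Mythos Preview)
Your argument is correct and follows the same overall strategy as the paper's: reduce via Theorems I and II, then use Mostert--Shields together with the observation that any idempotent $e\in T\setminus\{0,1\}$ yields a three-element G\"odel subalgebra $\{0,e,1\}$, and hence a forbidden non-classical extension satisfying \P{1}. The executions differ in two respects. For necessity, the paper simply invokes the fact (Cignoli--Torrens) that classical logic is the \emph{only} proper extension of $\PP$, whereas you argue directly that a product chain validating the G\"odel or the {\L}ukasiewicz axiom collapses to $\{0,1\}$; both work, yours being more self-contained. For sufficiency, the paper first disposes of finite $T_*$ via the decomposition of finite BL-chains into MV-chains, and in the infinite case applies the abstract subalgebra-of-ordinal-sum lemma, leaving the role of closedness implicit. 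Your route avoids the finite/infinite split and makes closedness do visible work: the sequence $x,x*x,x*x*x,\ldots$ forces the base $a_i$ of the relevant summand into $T$, hence into $T_{\mathrm{id}}=\{0,1\}$. This is a cleaner way to see exactly where the topological hypothesis enters, and it explains transparently why the non-closed example $\{0,1\}\oplus\mathcal{C}\oplus\mathcal{C}$ of Remark~\ref{rem:fails} escapes the conclusion. The isomorphism you flagged is indeed routine once one checks that for $x,y\in T\cap[0,b)$ the ambient residuum $x\to y$ already lies in $[0,b)\cup\{1\}$ and matches the summand residuum under $1\leftrightarrow b$. For the reference you request, the paper uses \cite[Corollary~2.9]{cignoli_torrens}: every infinite subalgebra of $T_\times$ induces $\PP$, and every subalgebra with more than two elements is infinite by cancellativity on $(0,1]$.
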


\noindent We prove Theorem III in Section \ref{s:product}.

\smallskip The proofs of Theorems I--III are relatively straightforward applications of known facts about extensions of Basic Logic.
The interest of the present contribution, if any, is thus to be sought not so much in the technical depth of the results, as in the significance of the two principles \P{1} and \P{2} in connection with logics of comparative truth. Before turning to the proofs, let us therefore expound  on \P{1} and \P{2} a little.

Logics  fulfilling \P{1} share with classical logic the feature that each proposition is uniquely determined, up to logical equivalence, by the collection of its true interpretations (that is, models), where `true' in the latter sentence is to be read as `true to degree $1$'. In the classical case this may be conceived as a consequence of the Principle of Bivalence, along with completeness. (Indeed, if in classical logic $\alpha$ and $\beta$ evaluate to $1$ at exactly the same $\mu$'s, then, by bivalence, they evaluate to
 the same value at each $\mu$; hence $\alpha \leftrightarrow\beta$ is a tautology, and we therefore have $\vdash \alpha \leftrightarrow \beta$  by completeness.) Theorem I shows that, remarkably, real-valued logics that \emph{fail} the Principle of Bivalence---for instance, G\"{o}del logic---may still satisfy \P{1}.

Logics fulfilling \P{2} share with classical logic the feature that distinct models of the logic can be separated by some formula. In more detail, classical logic has the property that if $\mu$ and $\nu$ are two distinct true interpretations  of its axioms, then there is a formula $\alpha$ that can tell apart the two models $\mu$ and $\nu$, in the sense that $\alpha$ is not false (\ie\ true) in $\mu$ but false in $\nu$.  A logic failing \P{2}, by contrast, must allow two distinct true interpretations $\mu$ and $\nu$ of its axioms which are indiscernible, in the sense that no proposition  is false (\ie\ evaluates to  degree $0$) in $\nu$ and  not false (\ie\ evaluates to degree $>0$) in $\mu$.\footnote{It should be emphasised that there is some leeway in formulating the separating conditions $\mu(\alpha)>0$ and $\nu(\alpha)=0$ here: see Corollary \ref{cor:equiv} below for  equivalent variants.} In this precise sense, the given real-valued semantics of such a logic is redundant, as one could identify $\mu$ and $\nu$ without any logical loss. Theorem II shows that, remarkably, there is just one $[0,1]$-valued logic---namely, {\L}ukasiewicz logic---capable of avoiding that redundancy, by actually telling apart any two distinct real numbers in $[0,1]$.
\section{Preliminary facts about real-valued logics.}\label{s:pre}
We outline the framework of H\'ajek's Basic Logic.
A (\emph{continuous}) {\em t-norm} is a binary operation $* \colon [0,1]^2 \to [0,1]$, continuous with respect to the Euclidean topology of $[0,1]$, that is associative, commutative, has $1$ as neutral element, and is monotonically non-decreasing in each argument:
\begin{align*}
\forall a,b,c \in [0,1]\ : \ b \leq c\ \Longrightarrow \ a * b \leq a * c.
\end{align*}
For $a,b \in [0,1]$, set $a \to_* b := \sup{\{ c \in [0,1] \mid a * c \leq b \}}$.
It is well known \cite[Sec.\ 2.1.3]{hajek} that continuity is sufficient to entail $a \to_* b = \max{\{ c \in [0,1] \mid a * c \leq b \}}$. The operation $\to_*$ is called the {\em residuum} of $*$.
Recall that the residuum determines the underlying order, that is, $a \leq b$ if, and only if, $a \to_* b = 1$. Further recall that the subset of $\Form$ that evaluates to $1$ under every valuation $\mu\colon \Form \to ([0,1],*,\to_{*},\bot)$, is by definition the collection of all tautologies of \BL. It is one of the main achievements of \cite{hajek}, of course, that this set is recursively axiomatisable by schemata, using \textit{modus ponens} as the only deduction rule; see also \cite{cignolietal_bl} for an improved axiomatisation. Moreover, \BL\ is an algebraizable logic, see \cite[p.25 and references therein]{hajek}; the algebras in the corresponding variety are called \emph{BL-algebras}, and schematic extensions of \BL\ are in one-one natural correspondence with subvarieties of BL-algebras. Each t-norm $*\colon [0,1]^{2} \to [0,1]$ induces a BL-algebra $([0,1],*,\to_{*},0)$, and the variety of BL-algebras is generated by the collection of all t-norms. More generally, each algebra of truth values as defined above is a BL-algebra. We occasionally write `BL-chain' for `totally ordered BL-algebra'.

Given algebras of truth-values $T_*, T_{*}' \subseteq [0,1]$, we say that $\sigma \colon T_* \to T_{*}'$ is
an {\em isomorphism} if $\sigma$ is an isomorphism of BL-algebras; equivalently, $\sigma$ is a bijection,
for all $a,b \in T_*$ we have $\sigma( a * b ) = \sigma(a) * \sigma(b)$, and
$a \leq b$ implies $\sigma(a) \leq \sigma(b)$.

Recall the  three fundamental t-norms.
\begin{align}
x \odot y &:= \max\{0,x+y-1\}\\
x \min y&:=\min\{x,y\}\label{e:gtnorm}\\ 
x \times y &:= xy \label{e:prodtnorm}
\end{align}
The associated residua evaluate to $1$ for each $x,y \in [0,1]$ with $x \leq y$; when $x >y$, they are respectively given by:
\begin{align*}
x \to_\odot y &:= \min\{1,1-x+y\} \\ 
x \to_{\min} y &:=y \\ 
x \to_\times y &:= \frac{y}{x}  
\end{align*}
The algebra of truth values $T_\odot:=([0,1],\odot,\to_{\odot},0)$ is called the \emph{standard MV-algebra};
the \emph{standard G\"odel algebra}, denoted $T_{\min}$, and the \emph{standard Product algebra}, denoted $T_\times$, are defined analogously using (\ref{e:gtnorm}--\ref{e:prodtnorm}) and their residua. The important \emph{completeness theorems} for \LL, \G, and \PP\ will be tacitly assumed throughout: they state that these logics are complete with respect to evaluations into $T_{\odot}$, $T_{\min}$, and $T_{\times}$, respectively. For proofs and references, consult \cite[Theorems 3.2.13, 4.2.17, and 4.1.13, and \textit{passim}]{hajek}.

\smallskip In the remainder of this section we collect  technical results needed in the sequel. We begin with a remark that will find frequent application.
\begin{remark}\label{r:autos}
For any real-valued logic $(\Log, T_*)$, let $T_{*}'$ be an algebra of truth values that is isomorphic to $T_{*}$. Then
the logic induced by $T_*'$ is again $\Log$.
This follows immediately from the fact that $\sigma(1) = 1$ and $\sigma^{-1}(1) = 1$ for any isomorphism $\sigma\colon T_{*} \to T_{*}'$.
The converse statement is false in general: it is well known that non-isomorphic t-norms  may induce the same real-valued logic.
However, the following hold.
\begin{enumerate}
\item The only t-norm inducing $\G$ is the minimum operator, for  it is the only idempotent t-norm. See  \cite[Theorem 2.1.16]{hajek}.
\item Each t-norm inducing $\LL$ is isomorphic to $T_\odot$. See \cite[Lemmata 2.1.22.(2) and 2.1.23]{hajek}.
\item Each t-norm inducing $\PP$ is isomorphic to $T_\times$. See \cite[Lemma 2.1.22.(1)]{hajek}. \qed
\end{enumerate}
\end{remark}
\begin{lemma}\label{l:iffsemantics}For any real-valued logic $(\Log,T_*)$, and for any formul\ae\ $\alpha,\beta \in \Form$, we have:
\begin{align*}
\provesL\, \alpha \leftrightarrow \beta \ \ \ \ \Longleftrightarrow \ \ \ \  \ \provesL\, \alpha \to \beta\, \text{ and } \, \provesL\, \beta \to \alpha \ \ \ \ \Longleftrightarrow \ \ \ \ \mu(\alpha)=\mu(\beta) \text{ for all valuations } \mu \colon \Form \to T_*.
\end{align*}
\end{lemma}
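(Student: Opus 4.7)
The plan is to prove both equivalences by reducing each side to a statement about truth values in $T_*$, using the completeness of $\Log$ with respect to $T_*$ (which is part of the definition of a real-valued logic). The proof should be entirely routine once one recalls the relevant identities valid in every BL-algebra.

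For the first equivalence, I would unfold the definition $\alpha \leftrightarrow \beta := (\alpha \to \beta)\,\&\,(\beta \to \alpha)$ and appeal to the fact that in any BL-algebra one has $a*b=1$ if, and only if, $a=1$ and $b=1$. The nontrivial direction of this fact follows from monotonicity of $*$ together with $1$ being the neutral element: $a*b \leq a*1 = a$, and similarly $a*b \leq b$, so $a*b=1$ forces $a=b=1$. Applying this pointwise to a valuation $\mu\colon \Form \to T_*$ and invoking completeness of $\Log$ with respect to $T_*$ yields the desired equivalence $\provesL \alpha \leftrightarrow \beta$ iff both $\provesL \alpha \to \beta$ and $\provesL \beta \to \alpha$.

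For the second equivalence, I would use the property that the residuum determines the underlying order, as recalled in Section \ref{s:pre}: for $a,b \in T_*$, one has $a \to_* b = 1$ if, and only if, $a \leq b$. By completeness, $\provesL \alpha \to \beta$ is equivalent to $\mu(\alpha) \to_* \mu(\beta)=1$ for every valuation $\mu\colon \Form \to T_*$, hence to $\mu(\alpha) \leq \mu(\beta)$ for every such $\mu$. Symmetrically for $\beta \to \alpha$. Conjoining the two inequalities yields $\mu(\alpha)=\mu(\beta)$ for every $\mu$, and conversely.

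I do not expect any real obstacle: the only ingredients beyond the definitions are the two BL-algebra facts ($a*b=1 \Leftrightarrow a=b=1$ and $a\to_* b = 1 \Leftrightarrow a \leq b$) and the assumed completeness of $\Log$ with respect to $T_*$. Care must only be taken to quantify over \emph{all} valuations into $T_*$ on the semantic side, since completeness is formulated in exactly those terms.
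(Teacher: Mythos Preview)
Your proposal is correct and follows essentially the same route as the paper: both proofs unfold $\leftrightarrow$, invoke completeness with respect to $T_*$, use that $a*b=1$ iff $a=b=1$, and use that $a\to_* b=1$ iff $a\leq b$. If anything, your version is slightly more careful in justifying the implication $a*b=1\Rightarrow a=b=1$ via monotonicity, which the paper glosses over with a reference to $1$ being neutral.
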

\begin{proof}
Indeed, $\provesL\, \alpha \leftrightarrow \beta$ iff, by the completeness of $\Log$ with respect to $T_*$, for all valuations $\mu \colon \Form \to T_*$ we have $\mu(\alpha \leftrightarrow \beta) = 1$
iff, since $1$ is the neutral element for $*$,  $\mu(\alpha \to \beta) = \mu(\beta \to \alpha) = 1$ iff, by the completeness of $\Log$ with respect to $T_*$, $\provesL\, \alpha \to \beta\, \text{ and } \, \provesL\, \beta\to \alpha$ iff, since $\mu(\alpha\to\beta)=1$ is equivalent to $\mu(\alpha)\leq \mu(\beta)$ by the definition of residuum, $\mu(\alpha) = \mu(\beta)$.
\end{proof}
 BL-algebras are defined over the signature $( *, \to, \bot )$.  {\em Basic hoops} are the $\bot$-free subreducts of BL-algebras, the latter considered over the extended signature that includes $\top:=\bot\to\bot$. Conversely, BL-algebras are {\em bounded} basic hoops, that is, basic hoops with a minimum element which interprets the new constant $\bot$. Let now $(I, \leq)$ be a totally ordered set, and let $\{C_i\}_{i \in I}$ be a family of totally ordered basic hoops,
where $C_i := (C_i, *_i, \to_i, 1)$. Assume further that $C_i \cap C_j = \{1\}$ for each $i \neq j \in I$.
Then the {\em ordinal sum} of the family  $\{C_i\}_{i \in I}$ is the structure\footnote{Usage of the symbol $\oplus$ to denote ordinal sums seems fairly standard. It is also standard to use $\oplus$ to denote {\L}ukasiewicz's strong disjunction, see \cite{cdm}. This we will do in Section \ref{s:leibniz}, where context should prevent confusion.}
$$\bigoplus_{i \in I} C_i := \left(\, \bigcup_{i \in I} C_i,\, *\,,\to,\, 1\, \right)\,,$$
where
$$
x * y =
\left\{
\begin{array}{ll}
x *_i y & \mbox{ if } x,y \in C_i, \\
y & \mbox{ if } x \in C_i,\, y \in  C_j \setminus \{1\},\, i > j, \\
x & \mbox{ otherwise, }
\end{array}
\right.
$$
and
$$
x \to y =
\left\{
\begin{array}{ll}
x \to_i y & \mbox{ if } x,y \in C_i, \\
y & \mbox{ if } x \in C_i,\, y \in  C_j,\, i > j, \\
1 & \mbox{ otherwise. }
\end{array}
\right.
$$
Each $C_i$ is called a {\em summand} of the ordinal sum.

\smallskip
 \begin{lemma}[The Mostert-Shields Structure Theorem]\label{l:ordinalsumdecomposition} Each algebra of truth values $([0,1],*,\to_{*},0)$
 is isomorphic to an ordinal sum of bounded basic hoops, each of which
is isomorphic to one among $T_{\odot}$, $T_{\min}$, $T_{\times}$, and $\{0,1\}$.
\end{lemma}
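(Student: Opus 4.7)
The plan is to invoke the classical Mostert--Shields analysis of continuous semigroup operations on a closed real interval, supplemented by Ling's theorem characterising continuous archimedean t-norms, and then to package the resulting local data into the ordinal sum described in the paper.

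First I would single out the set of idempotents $E := \{e \in [0,1] \mid e * e = e\}$. Continuity of $*$ forces $E$ to be closed in the Euclidean topology of $[0,1]$, and $E$ trivially contains both $0$ and $1$. The complement $[0,1] \setminus E$ is therefore an open subset of $[0,1]$, hence a countable disjoint union of open intervals of the form $(a_j,b_j)$ with $a_j, b_j \in E$. A standard monotonicity argument (using $a \leq 1$ and $e*e = e$) yields $e * x = \min\{e,x\}$ whenever $e \in E$, so on $E$ the operation $*$ reduces to the minimum, and $*$ does not mix distinct ``archimedean blocks'' $[a_j, b_j]$ with one another.

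Second, I would study $*$ on each closed block $[a_j, b_j]$. Because $a_j, b_j$ lie in $E$ while the interior of $[a_j,b_j]$ is disjoint from $E$, the restriction of $*$ to $[a_j, b_j]$ is a continuous archimedean operation having $b_j$ as neutral element and $a_j$ as absorbing element. By Ling's theorem (part of the classical Mostert--Shields picture of I-semigroups), any such operation is isomorphic, through an order-preserving homeomorphism $[a_j,b_j]\to [0,1]$, either to the {\L}ukasiewicz operation $\odot$ or to ordinary product $\times$. Hence the restricted bounded basic hoop on $[a_j, b_j]$ is isomorphic to $T_\odot$ or to $T_\times$.

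Third, I would assemble the ordinal sum. Let $I$ be the totally ordered set whose elements are (i) the maximal blocks $[a_j, b_j]$, and (ii) the maximal ``idempotent intervals'', that is, the maximal subintervals of $E$ between consecutive archimedean blocks (a single such interval may degenerate to a point). Order $I$ by inheriting the order from $[0,1]$. To each archimedean block associate the hoop identified in the second step, to each non-degenerate idempotent interval the hoop $T_{\min}$ (its operation being exactly $\min$, as shown in the first step), and to each isolated idempotent the two-element hoop $\{0,1\}$. Identifying the top of each summand with $1$, as is required in the paper's definition of ordinal sum, and checking that the piecewise definitions of $*$ and $\to$ on the union match exactly those of $*$ and $\to_*$ on $[0,1]$, yields the desired isomorphism $([0,1], *, \to_*, 0) \cong \bigoplus_{i \in I} C_i$.

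The main obstacle is Ling's classification of continuous archimedean t-norms used in the second step: this is the genuinely non-trivial analytic input, relying on the construction of an additive (or multiplicative) generator for a continuous archimedean operation. Once that theorem is in hand, the rest is essentially a bookkeeping exercise, with the only subtlety being the careful matching of the shared endpoints between consecutive summands so that the global operation on $[0,1]$ coincides with the one defined piecewise by the ordinal-sum formulas.
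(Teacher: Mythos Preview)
Your sketch is correct and is precisely the classical argument behind the cited result; the paper itself does not reprove the lemma but simply refers to \cite[Theorem~B]{mostert_shields}. Thus your proposal is not so much an alternative route as an unpacking of the citation: the idempotent set $E$ is closed, on $E$ the operation is $\min$, each maximal open gap carries a continuous archimedean t-norm that Ling's theorem forces to be isomorphic to $T_\odot$ or $T_\times$, and the pieces reassemble as an ordinal sum.

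One small point of care in your third step: speaking of ``maximal idempotent intervals between consecutive archimedean blocks'' is imprecise when $E$ is, say, a Cantor set and no two blocks are consecutive. The clean indexing is by idempotents: for each $e\in E\setminus\{1\}$ take as summand the half-open block $[e,b_e)$ together with $1$ when $e$ is the bottom of an archimedean component, and $\{e,1\}\cong\{0,1\}$ otherwise. With this indexing $T_{\min}$ never actually occurs as a summand (it is itself $\bigoplus_{r\in[0,1)}\{0,1\}$, as the paper notes in Lemma~\ref{l:2summands}), but of course listing it among the permitted summands, as the lemma does, is harmless. You already flag the endpoint-matching as the residual bookkeeping, so this is a refinement rather than a gap.
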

\begin{proof}
This
is essentially \cite[Theorem B]{mostert_shields}.
\end{proof}

\begin{lemma}\label{l:subalgebraofordinalsum}
Let $A$ be a subalgebra of an ordinal sum $\bigoplus_{i \in I} B_i$.
Then there exists $J \subseteq I$ and algebras $\{C_j \mid j \in J\}$
such that $C_j$ is a subalgebra of $B_j$ for each $j \in J$, and
$A \cong \bigoplus_{j \in J} C_j$.
\end{lemma}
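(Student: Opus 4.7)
\medskip

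\noindent\textbf{Proof plan.} The obvious construction is to set $C_i := A \cap B_i$ for every $i \in I$, and then let $J \subseteq I$ be the subset of those indices for which $C_i \neq \{1\}$ (equivalently, one may keep all of $I$: trivial summands do not affect the ordinal sum).

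First I would verify that each $C_i$ is a subalgebra of $B_i$. Since $1 \in A$ and $1 \in B_i$, we have $1 \in C_i$, so $C_i$ is nonempty and contains the neutral element. Given $x,y \in C_i$, the ordinal-sum definition shows that $x * y$, computed in $\bigoplus_{i \in I} B_i$, agrees with $x *_i y$ because both operands lie in the same summand. Hence $x *_i y \in A$, and since $x *_i y \in B_i$ as well, it lies in $C_i$. The same argument handles $\to_i$, and in the bounded case it also shows that $\bot$ belongs to the bottom summand's intersection.

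Next I would check that $A$ and $\bigoplus_{j \in J} C_j$ coincide under the identity on the underlying set. Every $a \in A$ lies in some $B_i$, hence in $C_i$, so $A \subseteq \bigcup_{j \in J} C_j$; the reverse inclusion is immediate since each $C_j \subseteq A$. The intersection property $C_i \cap C_{i'} \subseteq B_i \cap B_{i'} = \{1\}$ for $i \neq i'$ is inherited from the $B_i$'s, so the ordinal sum $\bigoplus_{j \in J} C_j$ is well defined and has the same underlying set as $A$. The case analysis defining the ordinal-sum operations on $\bigoplus_{i \in I} B_i$ depends only on the summand membership and the relative order of the indices; restricting to elements of $\bigcup_{j \in J} C_j$ and using the fact that $C_j$ inherits its operations from $B_j$ immediately yields the same case analysis as the one defining $\bigoplus_{j \in J} C_j$. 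Therefore the two structures are identical, which gives the required isomorphism.

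The only subtlety worth flagging is the treatment of the neutral element $1$, which by convention is shared by every summand of an ordinal sum; once one adopts the usual bookkeeping convention (so that $1$ is never counted as a distinguishing element of any particular summand in the case split for $*$ and $\to$), the verification reduces to a direct comparison of the defining formulae, and I do not expect any genuine obstacle.
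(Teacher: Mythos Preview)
Your argument is correct and is precisely the ``direct inspection of the definition of ordinal sum'' that the paper invokes as its entire proof; you have simply written out the routine verification that the paper leaves implicit. No alternative route is taken on either side.
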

\begin{proof}
Direct inspection of the definition of ordinal sum.
\end{proof}
\emph{MV-algebras} \cite{cdm} are (term equivalent to) BL-algebras satisfying the equation $\neg\neg x =x$, where $\neg x$ is short for $x \to \bot$. \emph{Wajsberg hoops} are the $\bot$-free subreducts of MV-algebras; equivalently,
MV-algebras are exactly the bounded Wajsberg hoops.
\begin{lemma}\label{l:finiteordsum}
Each finite BL-chain splits into an ordinal sum of finitely many finite MV-chains.
\end{lemma}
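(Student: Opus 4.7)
The plan is to stratify $A$ by its idempotent elements: between any two consecutive idempotents, $A$ behaves like a finite MV-chain, and the whole of $A$ is then recovered as the ordinal sum of these chains.

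Concretely, I would let $E := \{e \in A : e * e = e\}$ be the set of idempotent elements of $A$, which contains $0$ and $1$ and, by finiteness, enumerates as $E = \{e_0 = 0 < e_1 < \cdots < e_k = 1\}$. For $j = 1, \ldots, k$, set $I_j := \{x \in A : e_{j-1} \leq x \leq e_j\}$.

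A useful preliminary is the \emph{absorption identity}: if $e \in E$ and $y \leq e$, then $e * y = y$. This follows from divisibility in BL-algebras, which gives $y = y \wedge e = e * (e \to y)$, whence $e * y = (e * e) * (e \to y) = e * (e \to y) = y$. Using absorption and monotonicity, each $I_j$ is closed under $*$: for $x, y \in I_j$, $e_{j-1} = e_{j-1} * e_{j-1} \leq x * y \leq x \wedge y \leq e_j$. With $e_j$ playing the role of top element, $I_j$ then inherits the structure of a bounded Wajsberg hoop and, being finite, a finite MV-chain. Next, for $x \in I_\ell$ and $y \in I_j$ with $\ell > j$ and $y < e_j$, one has $y \leq e_{\ell - 1} \leq x$, and absorption combined with monotonicity yields $x * y = y$ (since $x * y \geq e_{\ell - 1} * y = y$ and $x * y \leq y$ automatically); the residuum is handled analogously. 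These computations match the cross-summand formulas of the ordinal sum, so $A$ is isomorphic to the ordinal sum of the $I_j$'s (in the convention of the paper, once the tops of consecutive summands are formally identified with $1_A$).

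The main obstacle is verifying that each $I_j$ satisfies the Wajsberg identity $(x \to y) \to y = (y \to x) \to x$, which is what pins it down as an MV-chain rather than, say, a G\"odel chain. This hinges on the absence of idempotents strictly in the open interval $(e_{j-1}, e_j)$, together with prelinearity and divisibility of BL-algebras. Alternatively, one may short-circuit the argument by invoking the Aglian\`o--Montagna ordinal-sum decomposition for arbitrary BL-chains, which packages all these verifications at once and reduces to the observation that a finite totally ordered Wajsberg hoop is, being bounded, a finite MV-chain.
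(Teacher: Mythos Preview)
Your proposal is correct, and the alternative you offer in your final sentence is precisely the paper's own proof: the paper simply cites \cite[Theorem 3.7]{agliano_montagna} (the Aglian\`o--Montagna decomposition of BL-chains into ordinal sums of totally ordered Wajsberg hoops) together with the observation that a finite Wajsberg hoop is bounded and hence an MV-chain. Nothing more is written.

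Your direct stratification by idempotents is a genuinely different route in presentation, though not in substance: it unpacks, in the finite case, exactly what the Aglian\`o--Montagna theorem proves in general. The gain is self-containment; the cost is that you must still discharge the ``main obstacle'' you flag, namely that each interval $I_j$ with no interior idempotent satisfies the Wajsberg identity. This is true and standard (a totally ordered divisible residuated lattice with no non-trivial idempotents is cancellative or an MV-chain; finiteness rules out the unbounded cancellative case), but it is essentially the heart of the Aglian\`o--Montagna argument, so your two routes converge. If you want the proof to stand on its own, you should spell out that step; otherwise the citation is the cleaner choice, and it is what the paper does.
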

\begin{proof}
This is \cite[Theorem 3.7]{agliano_montagna}, together with the observation that finite Wajsberg hoops are necessarily bounded.
\end{proof}
\begin{lemma}\label{l:2summands}Suppose the algebra of truth values $T_*$ is not a subalgebra of $T_{\odot}$. Then $T_*$ splits into a non-trivial ordinal sum of at least two
summands.
\end{lemma}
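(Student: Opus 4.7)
The plan is to combine the Mostert--Shields decomposition of the ambient $[0,1]$-BL-algebra with the subalgebra lemma, and then dispose of the residual single-summand case by peeling off the annihilator.

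First, I apply Lemma~\ref{l:ordinalsumdecomposition} to write the ambient BL-algebra $([0,1], *, \to_*, 0)$ as an ordinal sum $\bigoplus_{i \in I} B_i$ in which each summand $B_i$ is isomorphic to one of $T_\odot$, $T_{\min}$, $T_\times$, or $\{0,1\}$. Then by Lemma~\ref{l:subalgebraofordinalsum} applied to $T_* \leq \bigoplus_{i \in I} B_i$, I obtain a set $J \subseteq I$ and subalgebras $C_j \leq B_j$ for $j \in J$ with $T_* \cong \bigoplus_{j \in J} C_j$. If $|J| \geq 2$ the conclusion is immediate, so I may assume $|J| = 1$, whence $T_* \cong C_{j_0}$ embeds into a single summand $B_{j_0}$.

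A short case distinction on the type of $B_{j_0}$ does most of the work. If $B_{j_0}$ is isomorphic to $T_\odot$ or to $\{0,1\}$, then $T_*$ is a subalgebra of $T_\odot$, contradicting the hypothesis. So $B_{j_0}$ is isomorphic to either $T_{\min}$ or $T_\times$. Since $\{0,1\} \subseteq T_*$ for every algebra of truth values, either $T_* = \{0,1\}$---again a subalgebra of $T_\odot$, and hence excluded---or $T_*$ contains some $c \in (0,1)$.

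In this genuine last case I exhibit a two-summand decomposition by hand. Let $T_*' := T_* \setminus \{0\}$, equipped with the operations inherited from $B_{j_0}$ restricted to $(0,1]$, namely $\min$ (respectively, $\times$) and the corresponding residuum. A routine check shows $T_*'$ is a totally ordered basic hoop with top $1$ and $|T_*'| \geq 2$, since both $c$ and $1$ belong to it. The map $T_* \to \{0,1\} \oplus T_*'$ sending $0$ to the bottom of $\{0,1\}$ and each $x \in T_*'$ to itself is then an isomorphism of BL-algebras, a fact that follows from the defining clauses of the ordinal sum together with $0$ being the annihilator in $T_*$. I expect this last construction to be the principal obstacle: the general structural lemmas do not split a subalgebra lying inside a single G\"odel or Product summand, and one must observe separately that the annihilator can always be peeled off to produce a genuine two-summand ordinal sum.
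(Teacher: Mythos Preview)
Your proof is correct and follows essentially the same route as the paper's: Mostert--Shields plus the subalgebra-of-an-ordinal-sum lemma, followed by a case analysis on the single-summand situation. The only notable difference is that the paper splits into finite and infinite cases, invoking Lemma~\ref{l:finiteordsum} for the finite one, whereas you treat both uniformly and dispatch the residual $T_{\min}$/$T_\times$ case by the single ``peel off the annihilator'' observation $T_*\cong\{0,1\}\oplus(T_*\setminus\{0\})$; this is a mild streamlining rather than a different argument.
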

\begin{proof}
If $T_*$ is finite, from Lemma \ref{l:finiteordsum} it follows that $T_*$ is isomorphic to an ordinal sum of finitely many finite MV-chains. Since, by assumption, $T_*$ is not a subalgebra of $T_{\odot}$,
the ordinal sum must contain at least two summands.

If $T_*$ is an infinite subalgebra of $[0,1]$, by Lemmata \ref{l:ordinalsumdecomposition} and
\ref{l:subalgebraofordinalsum}
it follows that  $T_*$ is isomorphic to an ordinal sum $\bigoplus_{i \in I} C_i $ where each summand $C_i$ is isomorphic to
 a subalgebra of $T_\odot$, $T_{\min}$, or $T_\times$. If the index set $I$ has at least two elements, we are done; otherwise, by the hypotheses $T_*$ is  isomorphic
to a subalgebra of $T_{\min}$ or of $T_\times$, and it has more than two elements.
Now, by direct inspection, $T_{\min}$ is isomorphic to $\bigoplus_{r \in [0,1)} \{0,1\}$, while $T_\times$ is isomorphic to $\{0,1\} \oplus \mathcal{C}$, where $\mathcal{C} = (\, (0,1], \times, \to_\times, 1 \,)$
is known as the {\em standard cancellative hoop}.
Any subalgebra of $T_{\min}$ with more than two elements is then a non-trivial ordinal sum of copies of $\{0,1\}$,
while any subalgebra of $T_\times$ distinct from $\{0,1\}$ is of the form $\{0,1\} \oplus \mathcal{C}'$, for $\mathcal{C}'$ a subhoop of $\mathcal{C}$.
In both cases, $T_*$ splits into a non-trivial ordinal sum of at least two
summands.
\end{proof}
\begin{lemma}\label{l:varseval}Suppose the algebra of truth values $T_*$ splits into a non-trivial ordinal sum of at least two summands, say $\bigoplus_{i \in I} C_i$,
where each $C_{i}$ is a totally ordered basic hoop, and $|I|\geq 2$. Then $I$ has a least element, say $i_{0}$. Further,  let $S\subseteq T_*$ be the support of
a summand distinct from $C_{i_{0}}$.
For any two valuations $\mu, \nu \colon \Form \to T_*$ such that $\mu(\Var), \nu{(\Var)}\subseteq S$, and for any $\alpha \in\Form$, we have:
\begin{align*}
\mu(\alpha)=0 \ \ \ \ \Longleftrightarrow \ \ \ \ \nu(\alpha)=0.
\end{align*}
\end{lemma}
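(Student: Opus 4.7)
To see that $I$ has a least element, note that $0 \in T_* = \bigcup_{i \in I} C_i$ must belong to some summand $C_{i_0}$. If there were $i \in I$ with $i < i_0$, then any $x \in C_i \setminus \{1\}$ would satisfy $x \to 0 = 1$ by the ``otherwise'' clause of the ordinal-sum definition of $\to$ (since the case $i > i_0$ fails), hence $x \leq 0$ in the BL-algebra order. That order agrees with the Euclidean order on $T_* \subseteq [0,1]$, and $0$ is the minimum of $[0,1]$, so we would deduce $x = 0$; but $0 \in C_{i_0}$ and $C_i \cap C_{i_0} = \{1\}$, contradicting $x \neq 1$.

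For the main claim, set $S = C_j$ with $j \neq i_0$, so that $j > i_0$. The plan is to show that any valuation $\mu$ with $\mu(\Var) \subseteq C_j$ has image contained in $\{0\} \cup C_j$, and then to collapse this set onto $\{0,1\}$ by a BL-homomorphism that identifies every element of $C_j$ with $1$. First, a straightforward induction on the complexity of $\alpha$ yields $\mu(\alpha) \in \{0\} \cup C_j$. The base cases are $\mu(\bot) = 0$ and $\mu(X) \in C_j$, and the inductive step splits according to whether each of $\mu(\beta), \mu(\gamma)$ is $0$ or lies in $C_j$. When both operands are in $C_j$, the ordinal-sum operations restrict to $*_j$ and $\to_j$; when exactly one operand is $0$, the relevant clause of the ordinal-sum definition, together with $i_0 < j$, gives $0$, $1$, or the non-zero operand; when both operands are $0$, one reads off $0 \& 0 = 0$ and $0 \to 0 = 1$. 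In every case the result lies in $\{0\} \cup C_j$.

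Next, define $f \colon \{0\} \cup C_j \to \{0,1\}$ by $f(0) = 0$ and $f(x) = 1$ for $x \in C_j$. Running through the same four cases, one checks that $f$ preserves $\bot$, $\&$, and $\to$, so it is a BL-algebra homomorphism. Consequently, both $f \circ \mu$ and $f \circ \nu$ are classical valuations assigning $1$ to every propositional variable, so they coincide as functions on $\Form$. Therefore
\begin{align*}
\mu(\alpha) = 0 \iff f(\mu(\alpha)) = 0 \iff f(\nu(\alpha)) = 0 \iff \nu(\alpha) = 0,
\end{align*}
which is the required conclusion. No genuine obstacle is expected: the argument is routine case-work with the ordinal sum. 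The only subtlety to keep in view is that $C_j$ contains no element equal to $0$, so that $\mu(\alpha) = 0$ is equivalent to $\mu(\alpha) \notin C_j$; this is guaranteed by $j \neq i_0$ together with $C_j \cap C_{i_0} = \{1\}$, and is precisely what makes $f$ well defined and non-trivial.
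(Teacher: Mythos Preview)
Your argument is correct. The first half---the structural induction showing $\mu(\alpha)\in\{0\}\cup C_j$ whenever $\mu(\Var)\subseteq C_j$---is exactly the claim the paper proves first. Where you diverge is in the second step: the paper runs a \emph{second} induction on formul\ae, showing directly that $\mu(\alpha)=0$ forces $\nu(\alpha)=0$ by case analysis on $\&$ and $\to$. You instead package that case analysis once and for all into the map $f\colon\{0\}\cup C_j\to\{0,1\}$, check it is a BL-homomorphism, and then observe that $f\circ\mu$ and $f\circ\nu$ are the \emph{same} classical valuation (the one sending every variable to $1$), so they agree on every formula and $f^{-1}(0)=\{0\}$ finishes. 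This is a genuinely different organisation: the paper's double induction is more hands-on, while your quotient argument makes the underlying reason---that $\{0\}\cup C_j$ retracts onto the two-element Boolean algebra by collapsing $C_j$---explicit and reusable. The amount of case-checking is comparable (verifying $f$ is a homomorphism is the same four cases), but your route arguably isolates the conceptual content more cleanly, and it would generalise immediately to any valuation whose range on variables misses the first summand, not just those landing in a single $C_j$.
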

\begin{proof}Since $T_{*}$ is bounded below, the existence of $i_{0}$ follows from inspection of the definition of ordinal sum.

We first  prove the following \emph{claim} by induction on the structure of formul\ae:
For any valuation $\mu \colon \Form \to T_*$ such that $\mu{(\Var)}\subseteq S$, and for any $\alpha \in\Form$,
we have $\mu(\alpha) \in S \cup \{0\}$.

If $\alpha$ is either $\bot$ or $\alpha \in \Var$, the claim holds trivially.
Suppose $\alpha=\beta \& \gamma$.
By the induction hypothesis,  $\mu(\beta),\mu(\gamma) \in S \cup \{0\}$.
If both  $\mu(\beta),\mu(\gamma)\in S$ then, by the definition of ordinal sum, $\mu(\beta \& \gamma) \in S$, too.
If at least one among $\beta$ and $\gamma$, say $\beta$, is such that $\mu(\beta) = 0$, then $\mu(\beta \& \gamma) = 0$.
Hence $\mu(\beta \& \gamma) \in S \cup \{0\}$ for all $\mu$ such that $\mu{(\Var)}\subseteq S$.
Next suppose $\alpha=\beta \to \gamma$.
If $\mu(\beta) \leq \mu(\gamma)$, then $\mu(\beta \to \gamma) = 1 \in S$.
If $\mu(\beta) > \mu(\gamma) \in S$ then, by the definition of ordinal sum, $\mu(\beta \to \gamma) \in S$, too.
Finally, if  $\mu(\beta) \in S$ and $\mu(\gamma) = 0$, then $\mu(\beta \to \gamma) = 0$. In all cases $\mu(\beta \to \gamma) \in S \cup \{0\}$.
This settles the \emph{claim}.

Consider now $\mu, \nu \colon \Form \to T_*$ such that $\mu(\Var), \nu{(\Var)}\subseteq S$, and any formula $\alpha \in \Form$.
It suffices to show that $\mu(\alpha) = 0$ implies $\nu(\alpha)= 0$.
By the preceding claim, we have $\mu(\alpha),\nu(\alpha) \in S \cup \{0\}$.
We proceed again by induction on the structure of formul\ae.
The base cases $\alpha=\bot$ or $\alpha \in \Var$ hold trivially.
Let $\alpha=\beta \& \gamma$. The definition of ordinal sum entails that $\mu(\beta \& \gamma) = 0$ can only occur if
at least one of
$\mu(\beta)$ and $\mu(\gamma)$, say $\mu(\beta)$, lies in the first summand $C_{i_{0}}$. By the preceding claim,
$\mu(\beta) = 0$. By induction
$\nu(\beta) = 0$, and therefore $\nu(\beta \& \gamma) = 0$.
Let $\alpha=\beta \to \gamma$.
Assume $\mu(\beta \to \gamma) = 0$. The definition of ordinal sum entails
either
$\mu(\beta) > \mu(\gamma) = 0$,
or both $\mu(\beta),\mu(\gamma) \in C_{i_0}$.
In the latter case, the preceding claim shows $\mu(\beta) = \mu(\gamma) = 0$, and therefore
$\mu(\beta \to \gamma) =1$, which is a contradiction.
In the former case,  by induction
$\nu(\beta)  > \nu(\gamma) = 0$. By the preceding claim, $\nu(\beta) \in S$.
By the definition of ordinal sum
$\nu(\beta \to \gamma) = 0$.
This completes the proof.
\end{proof}
\section{Logics satisfying \P{1}.}\label{s:tarski}
\begin{lemma}\label{l:mintnorm}For any real-valued logic $(\Log,T_*)$, we have:
\begin{align*}
\Log \text{ extends } \G \ \ \ \ \Longleftrightarrow \ \ \ \ T_* \text{ is a subalgebra of } T_{\rm min}.
\end{align*}
Moreover, we have:
\begin{align*}
\Log \text{ extends } \G \text{ properly \textup{(}\ie\ $\Log\neq \G$\textup{)}} \ \ \ \ \Longleftrightarrow \ \ \ \ T_* \text{ is a finite subalgebra of } T_{\rm min}.
\end{align*}

\end{lemma}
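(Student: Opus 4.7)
The plan is to reduce the first equivalence to the validity of the G\"odel axiom $\varphi \to (\varphi \& \varphi)$ on $T_*$, and then to handle the ``moreover'' clause by separating the finite and infinite cases. For the first equivalence, the backward direction is routine: if $T_*$ is a subalgebra of $T_{\min}$ then $*$ restricted to $T_*$ is $\min$, which is idempotent, so the G\"odel axiom is valid in $T_*$ and hence belongs to $\Log$ by completeness. For the forward direction, completeness yields the validity of the G\"odel axiom in $T_*$; evaluating at a variable shows every $a \in T_*$ is $*$-idempotent. Monotonicity of $*$ then forces $a*b = \min(a,b)$ for all $a,b \in T_*$. For the residuum, I would use that $T_*$ is closed under $\to_*$: if $a > b$ in $T_*$ and $d := a \to_* b$, then $d \in T_*$, hence $a * d = \min(a,d) \leq b$ forces $d \leq b$, while $a * b = b$ gives $b \leq d$, so $d = b = a \to_{\min} b$. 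Therefore $T_*$ is a subalgebra of $T_{\min}$.

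For the moreover clause, by the first equivalence $T_*$ is already a subalgebra of $T_{\min}$, and one needs to show that $\Log \neq \G$ is equivalent to $T_*$ being finite. The backward direction is pigeonhole: if $|T_*| = n$, then the formula $\bigvee_{1 \leq i < j \leq n+1}(X_i \leftrightarrow X_j)$ evaluates to $1$ under every valuation into $T_*$ (two of the $n+1$ variables must coincide) and so belongs to $\Log$; but assigning $n+1$ distinct intermediate values in $T_{\min}$ refutes it, so it lies outside $\G$, hence $\Log \supsetneq \G$. For the forward direction I would show that if $T_*$ is infinite then $\Log = \G$. Given $\varphi \notin \G$, completeness of $\G$ with respect to $T_{\min}$ supplies a refuting valuation $\nu$; the subalgebra of $T_{\min}$ generated by $\{\nu(X_1),\ldots,\nu(X_k)\}$ (together with $0$ and $1$) is finite, because on a G\"odel chain $\to_{\min}$ returns either $1$ or its second argument. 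Since $T_*$ is infinite and contains $0,1$, it contains a subchain of the same cardinality, and any two finite G\"odel chains of equal cardinality are isomorphic (the operations on a chain are determined by its order). Transporting $\nu$ along the isomorphism produces a valuation $\mu \colon \Form \to T_*$ with $\mu(\varphi) < 1$, so $\varphi \notin \Log$.

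The main obstacle is the forward direction of the moreover clause, which rests on two structural observations about G\"odel chains: finitely generated subalgebras are finite, and any two finite G\"odel chains of equal cardinality are isomorphic. Both facts can alternatively be read off from the decomposition $T_{\min} \cong \bigoplus_{r \in [0,1)} \{0,1\}$ recorded in the proof of Lemma~\ref{l:2summands}, where subalgebras are classified via Lemma~\ref{l:subalgebraofordinalsum}.
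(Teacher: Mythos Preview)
Your proof is correct and follows the same conceptual outline as the paper---idempotence characterises the first equivalence, and the finite/infinite dichotomy handles the moreover clause---but you supply the details where the paper defers to the literature. For the first equivalence the paper simply invokes Remark~\ref{r:autos}.1 (minimum is the only idempotent t-norm) to pass from ``every $a\in T_*$ is idempotent'' to ``$T_*$ is a subalgebra of $T_{\min}$''; your direct computation that $a*b=\min(a,b)$ and $a\to_* b=a\to_{\min} b$ on $T_*$ is more explicit and arguably cleaner, since Remark~\ref{r:autos}.1 as stated concerns t-norms on all of $[0,1]$ rather than subalgebras. For the moreover clause the paper cites Dummett (every infinite subalgebra of $T_{\min}$ induces $\G$) and G\"odel's axiomatisation of the $n$-element chain; you instead reprove Dummett's result via the finite-subalgebra embedding and give an explicit pigeonhole formula $\bigvee_{i<j}(X_i\leftrightarrow X_j)$ witnessing the proper extension. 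The net effect is that your argument is self-contained and elementary, at the cost of being longer; the paper's is terse but leans on several external references.
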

\begin{proof}
$\Log$ extends $\G$ iff $\provesL\, X_1 \leftrightarrow X_1\&X_1$ iff,
by Lemma \ref{l:iffsemantics},
$\mu(X_1) = \mu(X_1) * \mu(X_1)$ for any valuation
$\mu \colon \Var \to T_*$ iff $a = a * a$ for any $a \in T_*$ iff $T_*$ is a subalgebra of $T_{\min}$. (The latter equivalence follows from   Remark \ref{r:autos}.1.)
Now, if $\Log$ extends $\G$ properly, then, by Remark \ref{r:autos}.1,
and the  fact that each infinite subalgebra of $T_{\rm min}$ induces $\G$ \cite[Theorem 4]{dummett59},
the underlying set of $T_*$ cannot be
an infinite subset of $[0,1]$, hence $T_*$ is a finite subalgebra of $T_{\min}$.
The other direction follows from \cite[Corollary 4.2.15]{hajek}, stating that any two finite subalgebras of $T_{\min}$ of the same cardinality are isomorphic,
and from the axiomatisation of the subvariety of G\"{o}del algebras generated by the $n$-element chain, essentially given in \cite{godel}.
\end{proof}
\begin{lemma}\label{l:maintarski}Any  real-valued logic that satisfies \P{1} is an extension of \G.
\end{lemma}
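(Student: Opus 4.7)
The plan is to argue by contrapositive: assume $\Log$ does not extend $\G$ and exhibit a pair of formul\ae\ that witnesses the failure of \P{1}. By Lemma~\ref{l:mintnorm}, $\Log$ extending $\G$ is equivalent to $T_{*}$ being a subalgebra of $T_{\min}$, that is, to every element of $T_{*}$ being idempotent with respect to $*$. So the negation of the hypothesis gives us an element $a \in T_{*}$ with $a * a < a$ (strictness follows since $a * a \leq a$ always holds, as $1$ is the neutral element and $*$ is monotone).

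The natural candidate witnesses are $\alpha := X_{1}$ and $\beta := X_{1} \& X_{1}$. First I would verify the right-hand side of \P{1}: for every valuation $\mu \colon \Form \to T_{*}$, one has $\mu(\alpha) = 1$ iff $\mu(\beta) = 1$. One direction is immediate since $1 * 1 = 1$; the other follows because $\mu(X_{1}) * \mu(X_{1}) \leq \mu(X_{1})$, so $\mu(X_{1}) * \mu(X_{1}) = 1$ forces $\mu(X_{1}) = 1$. Second, I would evaluate at the assignment $\mu(X_{1}) := a$, which shows $\mu(\alpha) = a \neq a * a = \mu(\beta)$. By Lemma~\ref{l:iffsemantics}, this prevents $\provesL \alpha \leftrightarrow \beta$. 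Thus \P{1} fails for this $T_{*}$, contradicting the assumption.

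There is essentially no obstacle; the entire argument rides on the well-chosen pair $(X_{1}, X_{1} \& X_{1})$, which converts the idempotency characterisation of $\G$ (Lemma~\ref{l:mintnorm}) into a direct refutation of \P{1}. The only subtlety worth flagging is the use of Lemma~\ref{l:iffsemantics} to pass from $\nprovesL \alpha \leftrightarrow \beta$ to the existence of a separating valuation with $\mu(\alpha) \neq \mu(\beta)$, which is precisely what feeds into the contradiction once the $1$-values of $\alpha$ and $\beta$ have been shown to coincide everywhere.
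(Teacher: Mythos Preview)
Your proof is correct and takes essentially the same approach as the paper: argue by contraposition with the witnesses $\alpha = X_1$ and $\beta = X_1 \& X_1$, verify that their $1$-sets coincide for every valuation, and conclude that $\nprovesL X_1 \leftrightarrow X_1\&X_1$. The only difference is cosmetic: the paper obtains $\nprovesL X_1 \leftrightarrow X_1\&X_1$ directly from the syntactic definition of $\G$ as the extension of \BL\ by $\varphi \to (\varphi\&\varphi)$, whereas you route through Lemma~\ref{l:mintnorm} and Lemma~\ref{l:iffsemantics} to reach the same conclusion semantically.
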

\begin{proof}We prove the contrapositive: a real-valued logic $\Log$ that does not extend  \G\ fails \P{1}. Indeed, by the hypothesis we have $\nprovesL\, X_1\leftrightarrow X_1\&X_1$. On the other hand, for any algebra of truth values $T_{*}$ inducing $\Log$, and for any valuation $\mu\colon \Form\to T_*$, we have
\begin{align}
\mu(X_1)=1 \ \ &\Rightarrow \ \ \mu(X_1\&X_1)=1,\label{d:firstimpl}\\
\mu(X_1\&X_1)=1 \ \ &\Rightarrow \ \ \mu(X_1)=1.\label{d:secondimpl}
\end{align}
Indeed, (\ref{d:firstimpl}) holds by the very definition of t-norm, which includes the condition $1*1=1$; and (\ref{d:secondimpl}) holds by the fact that t-norms are non-increasing in both arguments, whence $\mu(X_1\&X_1)\leq \mu(X_1)$. Now (\ref{d:firstimpl}--\ref{d:secondimpl}) show that $\Log$ fails \P{1} for $\alpha=X_1$ and $\beta =X_1\&X_1$.
\end{proof}

For the proof of the next lemma we recall the notion of  semantic consequence with respect to an algebra of truth values $T_*$.
Given a set $\Gamma \subseteq \Form$ and $\alpha \in \Form$, we say that $\alpha$ is a \emph{semantic consequence}
of $\Gamma$ with respect to $T_*$, in symbols $\Gamma \vDash_{T_*} \alpha$ if, for any valuation $\mu \colon \Var \to T_*$,
the fact that $\mu(\gamma) = 1$ for each $\gamma \in \Gamma$ implies $\mu(\alpha) = 1$.

\begin{lemma}\label{l:godimpliestarski}Any real-valued logic $\Log$  that  is an extension of $\G$ satisfies $\P{1}$.
\end{lemma}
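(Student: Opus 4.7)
The forward direction is immediate: if $\vdash_\Log \alpha \leftrightarrow \beta$, then by Lemma \ref{l:iffsemantics} $\mu(\alpha) = \mu(\beta)$ for every valuation $\mu$, and in particular $\mu(\alpha)=1$ iff $\mu(\beta)=1$. The substantive content is the converse, which I plan to prove by contraposition: assuming there exists a valuation $\mu \colon \Form \to T_{*}$ with $\mu(\alpha) \ne \mu(\beta)$, I will construct a valuation $\mu' \colon \Form \to T_*$ that separates $\alpha$ and $\beta$ at the value $1$. By Lemma \ref{l:iffsemantics} this will yield $\nprovesL \alpha \leftrightarrow \beta$ precisely when the right-hand condition of \P{1} fails.

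Since $\Log$ extends $\G$, Lemma \ref{l:mintnorm} tells us that $T_{*}$ is a subalgebra of $T_{\min}$, so the t-norm on $T_*$ is the minimum and the residuum is the Gödel implication ($a \to b = 1$ if $a \leq b$, and $a \to b = b$ otherwise). Without loss of generality $\mu(\alpha) < \mu(\beta)$, and set $b := \mu(\beta)$; note $b > 0$. If $b = 1$ we already have $\mu(\alpha) < 1 = \mu(\beta)$ and can take $\mu' := \mu$. Otherwise, I define
\begin{align*}
\mu'(X_i) := \begin{cases} 1 & \text{if } \mu(X_i) \geq b,\\ \mu(X_i) & \text{if } \mu(X_i) < b.\end{cases}
\end{align*}
Both $1$ and $\mu(X_i)$ belong to $T_*$, so $\mu'$ is a genuine valuation into $T_*$.

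The key step is a routine structural induction establishing that, for every formula $\psi$,
\begin{align*}
\mu'(\psi) = 1 \text{ if } \mu(\psi) \geq b, \qquad \mu'(\psi) = \mu(\psi) \text{ if } \mu(\psi) < b.
\end{align*}
The base case on variables is immediate from the definition of $\mu'$, and for $\bot$ it holds because $0 < b$. For $\psi = \psi_1 \& \psi_2$ one uses $a * b = \min\{a,b\}$ together with the fact that the map $a \mapsto \mu'(a)$ preserves the order (within $T_*$) and sends the interval $[b,1]$ to $\{1\}$ while fixing $[0,b)$. For $\psi = \psi_1 \to \psi_2$ the case analysis is slightly longer because the Gödel residuum behaves differently according to whether $\mu(\psi_1) \leq \mu(\psi_2)$; checking the four possibilities for whether $\mu(\psi_1),\mu(\psi_2)$ fall above or below $b$ confirms the claim, the delicate point being that when both lie in $[b,1]$, the residuum $\mu'(\psi_1) \to \mu'(\psi_2) = 1 \to 1 = 1$ still matches the expected value. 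Applying the claim to $\beta$ gives $\mu'(\beta) = 1$ since $\mu(\beta) = b$, while applying it to $\alpha$ gives $\mu'(\alpha) = \mu(\alpha) < 1$, so the pair $(\mu',\alpha,\beta)$ refutes the right-hand condition of \P{1}. I expect the main (mildly fiddly) obstacle to be the exhaustive bookkeeping in the inductive step for implication, but no deeper machinery is required.
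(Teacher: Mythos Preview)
Your argument is correct, but it follows a different route from the paper's. The paper proceeds syntactically: from the hypothesis that $\mu(\alpha)=1$ iff $\mu(\beta)=1$ for every $\mu$, it reads off the semantic consequences $\alpha \vDash_{T_*} \beta$ and $\beta \vDash_{T_*} \alpha$, then invokes \emph{strong completeness} of $\G$ (and of each of its proper real-valued extensions, all of which are finite-valued by Lemma~\ref{l:mintnorm}) to obtain $\alpha \provesL \beta$ and $\beta \provesL \alpha$, and finally applies the \emph{Deduction Theorem} for $\G$ and its extensions to conclude $\provesL \alpha\leftrightarrow\beta$. Your approach is purely semantic and more elementary: the map $h\colon T_*\to T_*$ sending $[b,1]$ to $1$ and fixing $[0,b)$ is a G\"odel-algebra endomorphism (this is exactly what your structural induction establishes), so pushing any witnessing valuation through $h$ produces a valuation that separates $\alpha$ and $\beta$ at the value~$1$. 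Your route avoids both strong completeness and the Deduction Theorem, relying only on Lemma~\ref{l:mintnorm} and a direct computation with the G\"odel operations; the paper's route is shorter to write and makes the proof-theoretic content (deduction plus completeness) explicit, at the price of citing heavier external results.
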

\begin{proof} Let $T_*$ be an algebra of truth values inducing $\Log$.
 By Lemma \ref{l:mintnorm} we know that $T_*$ is a subalgebra of $T_{\rm min}$. Let $\alpha, \beta \in\Form$ be such that $\mu(\alpha)=1$ iff $\mu(\beta)=1$, for each valuation $\mu\colon \Form \to T_*$. By the definition of semantic consequence,
we have $\alpha \vDash_{T_*} \beta$ and $\beta \vDash_{T_*} \alpha$.
Recall that $\G$ is strongly complete with respect to $T_{\min}$ (\cite[Theorem  4.2.17.(2)]{hajek}).
 By Lemma \ref{l:mintnorm}, each real-valued extension $\Log$ of $\G$  distinct from $\G$ is induced by a finite subalgebra of $T_{\min}$,   and it is moreover strongly complete with respect to any such (essentially unique) subalgebra (\cite[Proposition 4.18 and Corollary 4.19]{Distinguished}).
In all cases we therefore infer $\alpha \, \provesL \, \beta$ and $\beta \, \provesL \, \alpha$. The logic \G\ has the Deduction Theorem by \cite[Theorem 4.2.10.(1)]{hajek}, and the same proof shows that each extension of \G\ also has the Deduction Theorem. We thereby obtain
$\provesL\, \beta \to \alpha$ and $\provesL\,\alpha \to \beta$. Hence, by Lemma \ref{l:iffsemantics}, we conclude $\provesL\, \alpha\leftrightarrow\beta$, as was to be shown.
\end{proof}

\paragraph{Proof of Theorem I} Combine Lemmata \ref{l:maintarski} and \ref{l:godimpliestarski}. \qed

\begin{remark}\label{r:godelandMTL}
Theorem I holds even if we   relax the notion of real-valued logic considerably.
Recall that \emph{MTL} (\emph{monoidal t-norm-based logic}) is the logic of all left-continuous t-norms and their residua \cite{eg};  write  $\Form'$ for the set of well-formed formul\ae\ of MTL. (In contrast to BL,  here  it is necessary  to regard the lattice-theoretic conjunction $\wedge$ as primitive.) The algebraic semantics corresponding to MTL is provided by \emph{MTL-algebras}.
By a \emph{standard MTL-algebra} we mean an MTL-algebra induced by a left-continuous t-norm on $[0,1]$ and its residuum.
Now  replace the definition of real-valued logic by the following. The pair $(\Log, T_*)$
is a {\em real-valued logic} if $\Log$ is an extension of MTL that is complete with respect
to valuations $\mu \colon \Form' \to T_*$ into an arbitrary MTL-subalgebra $T_{*}$
of some standard MTL-algebra.
It is well known that Remark \ref{r:autos}.1 holds even
if we consider all left-continuous t-norms instead of the continuous ones only.
And it is possible to show that Lemmata \ref{l:mintnorm}, \ref{l:maintarski}, and \ref{l:godimpliestarski}
continue to hold. Hence Theorem I holds for real-valued logics in the present sense.
\qed
\end{remark}
\section{Logics satisfying \P{2}.}\label{s:leibniz}
\begin{lemma}\label{l:extL}For any real-valued logic $(\Log,T_*)$, we have:
\begin{align*}
\Log \text{ extends } \LL \ \ \ \ \Longleftrightarrow \ \ \ \ T_* \text{ is isomorphic to a  subalgebra of } T_{\odot}.
\end{align*}
Moreover, we have:
\begin{align*}
\Log \text{ extends } \LL \text{ properly \textup{(}\ie\ $\Log\neq \LL$\textup{)}}\ \ \ \ \Longleftrightarrow \ \ \ \ T_* \text{ is isomorphic to a finite subalgebra of } T_{\odot}.
\end{align*}
\end{lemma}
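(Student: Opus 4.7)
Our plan follows the strategy of the proof of Lemma \ref{l:mintnorm} \textit{mutatis mutandis}, trading idempotency for involutivity of negation, and Dummett's theorem for Chang's completeness theorem for MV-algebras.

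For the first equivalence we begin by observing, via Lemma \ref{l:iffsemantics}, that $\provesL \neg\neg X_1 \leftrightarrow X_1$ is equivalent to the requirement $\neg\neg a = a$ for every $a \in T_*$, \ie\ to $T_*$ being an MV-chain. Direction $(\Longleftarrow)$ is then immediate, since any subalgebra of $T_\odot$ inherits the equation $\neg\neg x = x$. For $(\Longrightarrow)$, we apply Lemmata \ref{l:ordinalsumdecomposition} and \ref{l:subalgebraofordinalsum} to write $T_* \cong \bigoplus_{j \in J} C_j$, with each $C_j$ a subalgebra of one among $T_\odot$, $T_{\min}$, $T_\times$, $\{0,1\}$, and with least index $j_0$. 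The central observation is that whenever $x$ is a non-top element lying in a summand of index strictly above $j_0$, the definition of ordinal sum gives $\neg x = x \to 0 = 0$, and hence $\neg\neg x = 1 \neq x$; this forces $J = \{j_0\}$. The same double-negation computation, applied within the first summand, shows that if $C_{j_0}$ is a subalgebra of $T_{\min}$ or of $T_\times$ then every element of $(0,1)$ violates $\neg\neg x = x$, so that $C_{j_0} = \{0,1\}$. In the remaining cases $C_{j_0}$ is already a subalgebra of $T_\odot$.

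For the second equivalence, the decisive fact is that every MV-subalgebra of $T_\odot$ is either finite, in which case it is isomorphic to a finite {\L}ukasiewicz chain, or dense in $[0,1]$. Direction $(\Longleftarrow)$ then follows from the standard result that each finite {\L}ukasiewicz chain induces a logic that properly extends $\LL$. For $(\Longrightarrow)$, the first equivalence yields that $T_*$ embeds into $T_\odot$; if $T_*$ were infinite, it would be dense in $[0,1]$, and then by continuity of the MV-operations its equational theory would coincide with that of $T_\odot$, making the logic induced by $T_*$ coincide with $\LL$, against hypothesis. Hence $T_*$ must be finite.

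The main point where care is required is justifying that an infinite MV-subalgebra of $T_\odot$ induces precisely $\LL$; this plays the role here that Dummett's theorem played in Lemma \ref{l:mintnorm}. It is a classical consequence of Chang's completeness theorem, which we will cite from \cite{cdm}.
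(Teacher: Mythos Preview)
Your proposal is correct and follows essentially the same route as the paper's proof: both reduce the first equivalence to the identity $\neg\neg a = a$ on $T_*$ via Lemma~\ref{l:iffsemantics}, and then use the ordinal-sum structure together with the observation that $\neg\neg c = 1$ for any non-top element $c$ outside the first summand to force $T_*$ into (an isomorphic copy of) a subalgebra of $T_\odot$. The only organisational differences are that the paper splits the finite and infinite cases and packages the ordinal-sum argument into Lemma~\ref{l:2summands}, whereas you run the decomposition directly without the case split; and for the second equivalence the paper cites \cite[Theorem 8.1.1]{cdm} (infinite subalgebras of $T_\odot$ generate all MV-algebras), while you argue via density plus continuity---both are standard and yield the same conclusion.
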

\begin{proof}

$\Log \text{ extends } \LL$ iff $\vdash_\Log \neg\neg X_1 \leftrightarrow X_1$ iff (by Lemma \ref{l:iffsemantics}) $\mu(X_1) = \neg\neg\mu(X_1)$ for any valuation $\mu \colon \Var \to T_*$
iff $a = \neg\neg a$ for any $a \in T_*$ iff (by Remark \ref{r:autos}.2) $T_*$ is an MV-algebra with some underlying set $U \subseteq [0,1]$. Now, if $U$ is finite, say of cardinality $n$,
then $T_*$ is isomorphic to the MV-chain $T_{n-1} = \{\frac{0}{n-1},\frac{1}{n-1},\ldots,\frac{n-2}{n-1},\frac{n-1}{n-1}\}$, by \cite[Proposition 3.6.5]{cdm},
and direct inspection shows that $T_{n-1}$ is a subalgebra of $T_\odot$.
Assume then that $U$ is infinite. Observe that $T_*$ cannot be a non-trivial ordinal sum of at least two summands: consider such a sum $B \oplus C$, and take $1 \neq c \in C$.
Then $\neg\neg c = 1 \neq c$, and hence $B \oplus C$ is not an MV-algebra.
By Lemma \ref{l:2summands} and  by Remark \ref{r:autos}.2,
$T_*$ is isomorphic to
a subalgebra of
$T_\odot$.
Clearly, if $T_*$ is isomorphic to a subalgebra of
$T_\odot$ then $\Log \text{ extends } \LL$. This proves the first statement.
Each finite MV-chain  generates a proper subvariety of the variety of MV-algebras (see \cite[Theorem 8.5.1]{cdm} for axiomatisations). Thus,
if $T_*$ is isomorphic to a finite subalgebra of $T_\odot$ then $\Log \text{ extends } \LL$ properly.
On the other hand, by \cite[Theorem 8.1.1]{cdm}, every infinite subalgebra of $T_\odot$ generates the whole variety of MV-algebras. This fact, together
with the first assertion of the lemma, suffices to complete the proof.
\end{proof}
\begin{lemma}\label{l:p2impliesL} Any real-valued logic $\Log$ that satisfies \P{2} is an extension of $\LL$.
\end{lemma}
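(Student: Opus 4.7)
The plan is to prove the contrapositive: if $\Log$ is not an extension of $\LL$, then $(\Log, T_*)$ fails $\P{2}$. By Lemma \ref{l:extL}, saying that $\Log$ does not extend $\LL$ amounts to saying that $T_*$ is not isomorphic to any subalgebra of $T_\odot$. Remark \ref{r:autos} ensures that replacing $T_*$ by an isomorphic copy does not change $\Log$, so without loss of generality $T_*$ itself is not a subalgebra of $T_\odot$. I can then invoke Lemma \ref{l:2summands} to decompose $T_*$ as a non-trivial ordinal sum $\bigoplus_{i\in I} C_i$ with $|I|\geq 2$, and I let $i_0$ denote the least index, as provided by Lemma \ref{l:varseval}.

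The core idea is then to exhibit two distinct valuations into $T_*$ that cannot be separated in the sense required by $\P{2}$. Pick any summand $C_j$ with $j>i_0$, and let $S\subseteq T_*$ be its support. Since the ordinal sum is non-trivial, $S$ contains $1$ together with at least one further element $s\neq 1$. Define $\mu,\nu\colon\Form\to T_*$ by setting $\mu(X_i):=1$ for every propositional variable $X_i$, and $\nu(X_1):=s$, $\nu(X_i):=1$ for $i\geq 2$. Then $\mu\neq\nu$ and $\mu(\Var),\nu(\Var)\subseteq S$, so Lemma \ref{l:varseval} applies and yields, for every formula $\alpha\in\Form$, the equivalence $\mu(\alpha)=0\Leftrightarrow\nu(\alpha)=0$. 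In particular there can be no formula $\alpha$ with $\mu(\alpha)>0$ and $\nu(\alpha)=0$, which is exactly the failure of $\P{2}$.

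The main place where care is needed is in guaranteeing that the separating summand $C_j$ is non-trivial, so that a witness $s\neq 1$ is available in $S$; this is where the force of the word ``non-trivial'' in Lemma \ref{l:2summands} is used, ruling out the pathological case in which every summand above $C_{i_0}$ reduces to $\{1\}$. Once this is secured, everything else is a direct combination of the structural decomposition Lemma \ref{l:2summands} with the indiscernibility principle Lemma \ref{l:varseval}; no further properties of $\LL$ are needed, because Lemma \ref{l:extL} has already translated the syntactic condition ``$\Log$ extends $\LL$'' into the purely algebraic one used above.
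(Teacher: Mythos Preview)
Your proof is correct and follows essentially the same route as the paper's: contraposition, Lemma~\ref{l:2summands} for the ordinal-sum decomposition, two distinct valuations taking values in a non-first summand, and Lemma~\ref{l:varseval} to conclude. The only cosmetic differences are that the paper argues directly (rather than via Lemma~\ref{l:extL}) that $T_*$ cannot be a subalgebra of $T_\odot$, and your ``without loss of generality'' step is unnecessary, since ``not isomorphic to any subalgebra of $T_\odot$'' already implies ``not a subalgebra of $T_\odot$''.
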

\begin{proof}By contraposition, suppose $\Log$ is not an extension of $\LL$. If $T_{*}$ is an algebra of truth values that induces $\Log$, then $T_{*}$ is not a subalgebra of $T_{\odot}$: for, given that $T_{\odot}$ does induce $\LL$ (\cf\ Remark \ref{r:autos}), any such subalgebra clearly induces an extension of $\LL$.
Hence, by Lemma \ref{l:2summands}, $T_{*}$ splits into a non-trivial ordinal sum of at least two summands.  With the notation therein, there exists a summand $S$ of $T_{*}$ distinct from the first one that is non-trivial, and thus contains  two distinct elements $v\neq w$. Let
$\mu_{v}$ be the unique valuation that sends each variable to $v$, and let $\nu_{w}$ be the unique valuation that sends each variable to $w$. Evidently, we have $\mu_{v}\neq \nu_{w}$, so that  $\mu_{v}$ and $\nu_{w}$ fail \P{2} by Lemma \ref{l:varseval}.
\end{proof}
\begin{remark}Let $(\Log, T_*)$ be a real-valued logic. In the next lemma we say, somewhat informally, that ``$\Log$ satisfies \P{2} with respect to $T_*$'', to mean that for any two valuations $\mu\neq \nu \colon\Form \to T_*$ there is   $\alpha\in\Form$ with $\mu(\alpha)>0$ and $\nu(\alpha)=0$. \qed
\end{remark}
\begin{lemma}\label{l:inv}Let $(\Log, T_{*})$ be a real-valued logic, and let $\sigma \colon T_{*}\to T'_{*'}$ be an isomorphism, where $T'_{*'}$ is an algebra of truth values. The logic induced by $T'_{*'}$ is again $\Log$, by Remark \ref{r:autos}. Then
$\Log$ satisfies \P{2} with respect to $T_{*}$ if, and only if, $\Log$ satisfies \P{2} with respect to $T'_{*'}$.
\end{lemma}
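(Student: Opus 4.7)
The plan is to transfer valuations back and forth between $T_*$ and $T'_{*'}$ using the isomorphism $\sigma$ and its inverse, then observe that the property \P{2} is expressed entirely in terms of equality/inequality of valuations at specific points and the condition of a value being zero or positive, all of which are preserved by $\sigma$.

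First, I would recall that an isomorphism of BL-algebras preserves the distinguished constant $\bot$, so $\sigma(0)=0$ (and automatically $\sigma(1)=1$, as already noted in Remark \ref{r:autos}). Since $\sigma$ is a bijection between BL-algebras, composition with $\sigma$ (resp.\ $\sigma^{-1}$) is a homomorphism-preserving operation: for any valuation $\mu\colon \Form \to T_*$, the composite $\mu':=\sigma\circ\mu$ is a BL-algebra homomorphism, hence a valuation $\Form \to T'_{*'}$; and conversely, if $\mu'\colon \Form\to T'_{*'}$ is a valuation, then $\sigma^{-1}\circ\mu'$ is a valuation into $T_*$. Thus $\mu \mapsto \sigma \circ \mu$ is a bijection between the valuations into $T_*$ and those into $T'_{*'}$.

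Second, I would record two immediate consequences of $\sigma$ being a bijection with $\sigma(0)=0$: for all $\mu,\nu\colon \Form \to T_*$ and all $\alpha \in \Form$,
\begin{align*}
\mu \neq \nu \ \ &\Longleftrightarrow\ \ \sigma\circ\mu \neq \sigma\circ\nu,\\
\mu(\alpha) = 0 \ \ &\Longleftrightarrow\ \ (\sigma\circ\mu)(\alpha) = 0,\\
\mu(\alpha) > 0 \ \ &\Longleftrightarrow\ \ (\sigma\circ\mu)(\alpha) > 0.
\end{align*}
The first equivalence holds because $\sigma$ is injective; the others follow because $\sigma$ is a bijection fixing $0$.

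With these tools, the argument is symmetric. For the forward direction, suppose $\Log$ satisfies \P{2} with respect to $T_*$, and take $\mu',\nu'\colon \Form \to T'_{*'}$ with $\mu'\neq \nu'$. Set $\mu:=\sigma^{-1}\circ\mu'$ and $\nu:=\sigma^{-1}\circ\nu'$; by the above, $\mu\neq \nu$, so by hypothesis there is $\alpha\in\Form$ with $\mu(\alpha)>0$ and $\nu(\alpha)=0$. Applying the equivalences in the other direction to $\mu'=\sigma\circ\mu$ and $\nu'=\sigma\circ\nu$ yields $\mu'(\alpha)>0$ and $\nu'(\alpha)=0$, so the same $\alpha$ separates $\mu'$ and $\nu'$. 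The converse direction is identical, with the roles of $\sigma$ and $\sigma^{-1}$ swapped.

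No serious obstacle is expected: the entire content lies in the routine observation that isomorphisms of BL-algebras preserve both the zero element and the partial order, and so the syntactic conditions figuring in \P{2} are invariant under transport along $\sigma$.
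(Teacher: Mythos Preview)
Your proposal is correct and follows essentially the same approach as the paper: transport valuations through the isomorphism and use that $\sigma$ is a bijection with $\sigma(0)=0$, so that the conditions ``$\mu\neq\nu$'', ``$\mu(\alpha)=0$'', and ``$\mu(\alpha)>0$'' are all preserved. The only cosmetic difference is that the paper argues by contraposition (assume \P{2} fails for $T'_{*'}$ and show it fails for $T_*$) and phrases the composition step via the Lindenbaum algebra, whereas you argue directly; the underlying content is identical.
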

\begin{proof}Since $\sigma^{-1}\colon T'_{*'}\to T_{*}$ is an isomorphism, too, it suffices to show that $\Log$  satisfies \P{2} with respect to $T'_{*'}$ if
$\Log$ satisfies \P{2} with respect to $T_{*}$. Proof by contraposition. Let $\mu\neq\nu\colon \Form\to T'_{*'}$ be valuations that
fail \P{2}. Thus, for all formul\ae\ $\alpha \in \Form$, we have $\mu(\alpha)=0$ if, and only if, $\nu(\alpha)=0$. Write $\mathrm{Free}_{\aleph_{0}}^{\Log}$ for the Lindenbam algebra of the logic $\Log$. As usual, we may identify  formul\ae, modulo the logical-equivalence relation induced by $\provesL$, with elements of $\mathrm{Free}_{\aleph_{0}}^{\Log}$; and valuations with homomorphisms from $\mathrm{Free}_{\aleph_{0}}^{\Log}$ to $T_{*}$ (or to $T'_{*'}$, as the case may be). Then the compositions $\sigma^{-1}\circ \mu$ and $\sigma^{-1}\circ\nu$ are valuations into $T_{*}$, see the commutative diagram below.
\[
\begin{tikzpicture}[scale=0.3]
\node(A) at (0,5)   {${\mathrm{Free}_{\aleph_{0}}^{\Log}}$};
\node (P1) at (10,5) {$T_{*}$};
\node (P2) at (10,-4) {$T'_{*'}$};
\draw[transform canvas={yshift=0.4ex},<-] (P1) -- (A) node [above, midway] {$\sigma^{-1}\circ\mu$};
\draw[transform canvas={yshift=-0.4ex},<-] (P1) -- (A) node [below, midway] {$\sigma^{-1}\circ\nu$};
\draw[transform canvas={xshift=0.4ex},<-] (P2) -- (A) node [above right, midway] {$\mu$};
\draw[transform canvas={xshift=-0.5ex, yshift=-0.3ex},<-] (P2) -- (A) node [below left, midway] {$\nu$};
\draw [->] (P2) -- (P1) node [right, midway] {$\sigma^{-1}$};
\end{tikzpicture}
\]
It is not the case that $\sigma^{-1}\circ \mu = \sigma^{-1}\circ \nu$: for else $\mu=\nu$ would follow  by pre-composing with $\sigma$. Now for any $\alpha\in \Form$ we have:
\begin{align*}
\mu(\alpha)=0 \ \ \ & \text{iff} \ \ \ \ \nu(\alpha)=0 & \text{(by assumption),}\\
\sigma^{-1}(0)=0 \ \ \ &  & \text{(homomorphisms preserve $0$),}\\
\sigma^{-1}(\mu(\alpha)))=0 \ \ \ & \text{iff} \ \ \ \ \sigma^{-1}(\nu(\alpha)))=0 & \text{(by composition).}
\end{align*}
Hence $\Log$ fails \P{2} with respect to $T_{*}$, as was to be shown.
\end{proof}
\begin{lemma}\label{l:separation}{\L}ukasiewicz logic \LL\   satisfies \P{2}.
\end{lemma}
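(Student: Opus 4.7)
The plan is to reduce to the standard setting $T_{*}\subseteq T_{\odot}$, and then to produce the required separating formula by appeal to McNaughton's representation theorem for free MV-algebras.

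First, by Lemma~\ref{l:extL} applied with $\Log=\LL$, every algebra of truth values inducing $\LL$ is isomorphic to an (infinite) subalgebra of $T_{\odot}$. Since, by Lemma~\ref{l:inv}, whether $\LL$ satisfies \P{2} with respect to $T_{*}$ depends only on the isomorphism class of $T_{*}$, I may assume without loss of generality that $T_{*}$ is itself a subalgebra of $T_{\odot}$. Given two distinct valuations $\mu, \nu \colon \Form \to T_{*}$, there must be a propositional variable $X_{i}$ with $\mu(X_{i}) \neq \nu(X_{i})$; set $a:=\mu(X_{i})$ and $b:=\nu(X_{i})$. Replacing $X_{i}$ by $\neg X_{i}$ if necessary (which sends $(a,b)$ to $(1-a,1-b)$ and thereby reverses their order), I may further assume $a>b$.

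By density of the rationals in the reals, I pick integers $p,q$ with $q\geq 1$, $0\leq p<q$, and $b<p/q<a$. The one-variable function $f(x):=\max\{0,\min\{1,qx-p\}\}$ is a McNaughton function on $[0,1]$: continuous, piecewise linear, with integer coefficients on each linear piece. By McNaughton's representation theorem for free MV-algebras (see, e.g., \cite{cdm}), there exists a {\L}ukasiewicz formula $\alpha\in\Form$ in the variable $X_{i}$ whose interpretation under any valuation $\lambda\colon \Form\to T_{\odot}$ coincides with $f(\lambda(X_{i}))$. Since $T_{*}$ is a subalgebra of $T_{\odot}$, the values $\mu(\alpha)$ and $\nu(\alpha)$ lie in $T_{*}$, as required.

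It then remains to verify the separation. Now $\mu(\alpha)=f(a)$; since $a>p/q$ gives $qa-p>0$, it follows that $\mu(\alpha)>0$. Similarly $\nu(\alpha)=f(b)$; since $b<p/q$ gives $qb-p<0$, the outer $\max$ clips this to $0$, whence $\nu(\alpha)=0$. This establishes \P{2} for $\LL$. The only genuinely non-trivial ingredient is McNaughton's theorem; the rest is a routine consequence of Lemmata~\ref{l:extL} and~\ref{l:inv} together with the density of $\Q$ in $\R$. I expect the main obstacle, if one prefers a self-contained proof, to be not the logical reduction but rather the explicit exhibition of a formula $\alpha$ representing the threshold function $f$ for an arbitrary rational $p/q$---a classical but somewhat tedious construction in free MV-algebras.
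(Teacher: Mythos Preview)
Your argument is correct, and in fact it is precisely the route the paper itself flags in Remark~\ref{rem:man}: reduce to the one-variable case and invoke McNaughton's theorem to realise a threshold function separating $a$ from $b$. The paper then deliberately avoids this and instead uses the weaker result \cite[Corollary~2.8]{agu} on \emph{basic literals}---terms built by alternating $n$-fold $\oplus$ and $n$-fold $\odot$---to exhibit the separating formula explicitly, the point being that the full strength of McNaughton's theorem is not needed for \P{2}. Your closing paragraph already anticipates exactly this trade-off: McNaughton gives a clean existence statement, while an explicit construction of a formula for a rational threshold is what the basic-literal machinery supplies. One small cosmetic note: your appeal to Lemma~\ref{l:extL} is fine, but the paper's own reduction cites Remark~\ref{r:autos}.2 and Lemma~\ref{l:inv} and then observes (implicitly) that \P{2} for $T_\odot$ descends to any subalgebra, which is the same content.
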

\begin{remark}\label{rem:man}A proof of Lemma \ref{l:separation} can be obtained  as a consequence of McNaughton's Theorem \cite[9.1]{cdm}; in fact, the proof can be
 reduced to the one-variable case \cite[3.2]{cdm}. Here we give a proof that uses a weaker (and simpler) result from \cite{agu}, thus showing that the full strength of McNaughton's Theorem is not needed to fulfill \P{2}.\qed
\end{remark}
\begin{proof}
In light of Remark \ref{r:autos}.2 and Lemma \ref{l:inv},
it suffices to show that \LL\ satisfies \P{2} with respect to the  {\L}ukasiewicz t-norm $\odot$ on  $[0,1]$. For terms $s$ and $t$ over the binary monoidal operation $\odot$ and the unary operation $\neg$, set $s\oplus t :=\neg(\neg s\odot \neg t)$. Let us write $nt$ as a shorthand for $t\oplus\cdots \oplus t$ ($n-1$ occurrences of $\oplus$), and
$t^{n}$ as a shorthand for $t\odot\cdots \odot t$ ($n-1$ occurrences of $\odot$). We inductively define the set of \emph{basic literals} (in the variables $X_{i}$, $i=1,2,\ldots$) as follows.
\begin{itemize}
\item $X_{i}$ is a basic literal;
\item each term $s$ either of the form $s=nt$ or of the form $s=t^{n}$, for  some  integer $n>0$, is a basic literal, provided that  $t$ is a basic
literal;
\item nothing else is a basic literal.
\end{itemize}
Given integers $n_{1}\geq 1$, and $n_{2},\ldots,n_{u}>1$, we write $(n_{1},n_{2},\ldots,n_{u})X_{i}$ to denote the basic literal $$(\cdots ((n_{i}\cdots ((n_1X_{i})^{n_2} \cdots ))^{n_{i+1}})\cdots ).$$
In this proof, a \emph{term function} is any function $\lambda_{\tau} \colon [0,1]^{n}\to [0,1]$ induced by interpreting over the standard MV-algebra $T_{\odot}=([0,1],\odot,\neg,0)$ a term $\tau$ whose variables are contained in $\{X_1,\ldots,X_{n}\}$. Below we also use the  interpretation of the definable lattice connective $\wedge$ as the minimum operator.
\begin{claim}\label{c:separation}
For any integer $n \geq 1$, and for any two points $p\neq q\in[0,1]^{n}$, there is a term $\tau$ whose term function $\lambda_{\tau} \colon [0,1]^{n}\to [0,1]$  takes value $0$ at $q$, and value $>0$ at $p$.
\end{claim}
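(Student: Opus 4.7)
}
My plan is threefold: reduce to one variable, then to the case $q_i<p_i$, and finally exhibit a separating basic literal. Since $p\neq q$, some coordinate $i$ satisfies $p_i\neq q_i$; it is enough to find a term $\tau_0$ in the single variable $X_i$ with $\lambda_{\tau_0}(q_i)=0$ and $\lambda_{\tau_0}(p_i)>0$, for then, regarded as a term in $X_1,\ldots,X_n$ that ignores the other variables, such $\tau_0$ is the required $\tau$. If $q_i>p_i$, then $1-q_i<1-p_i$; I solve the problem for the latter pair to obtain some $\sigma(Y)$, and then take $\tau_0:=\sigma(\neg X_i)$, using that negation in $T_\odot$ sends $a$ to $1-a$. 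Thus I may assume $q_i<p_i$.

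A direct computation in $T_\odot$ shows that the two-level basic literal $(n_1X_i)^{n_2}$ realises the term function
\begin{align*}
\lambda_{(n_1X_i)^{n_2}}(x)=\max\bigl\{\,0,\,n_2\min\{1,n_1x\}-(n_2-1)\,\bigr\}\,,
\end{align*}
which vanishes exactly on $[0,(n_2-1)/(n_1n_2)]$ and is strictly positive on the complement. Hence it would suffice to find integers $n_1\geq 1$ and $n_2\geq 2$ with the threshold $(n_2-1)/(n_1n_2)$ inside $[q_i,p_i)$.

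The main obstacle is that the set of two-level thresholds is not dense in $[0,1]$: for $n_1=1$ they are of the form $1-1/n_2$, and for $n_1\geq 2$ they are bounded above by $1/n_1\leq 1/2$; in particular, no such threshold lies in $(0.59,0.61)$. One must therefore pass to longer basic literals $(n_1,n_2,\ldots,n_u)X_i$, whose thresholds form a richer family of rationals obtained by interleaving finer rational corrections. At this step I would invoke the technical result from \cite{agu}---weaker than McNaughton's theorem---which guarantees that, for every pair $q_i<p_i$ in $[0,1]$, some basic literal has its threshold strictly inside $(q_i,p_i)$. The corresponding basic literal is the required $\tau_0$, and the argument is complete.
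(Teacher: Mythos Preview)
Your proof is correct and follows essentially the same route as the paper: reduce to a single coordinate and invoke \cite[Corollary 2.8]{agu} to obtain a basic literal whose zero-set is $[0,h/k]$ for a rational $h/k$ chosen between the two values. The only differences are cosmetic---the paper omits the two-level digression, and for the case $p_i<q_i$ it uses the companion statement from \cite{agu} that some basic literal $R$ has $\lambda_R^{-1}(1)=[h/k,1]$ and then takes $\neg R$, rather than your substitution of $\neg X_i$ into a solution for the reflected pair.
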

\begin{proof}
Since $p \neq q$ there exists an integer $i\geq 1$ such that $p(i) \neq q(i)$, that is, $p$ and $q$ differ at one of their coordinates.
If $q(i) < p(i)$ then there are integers  $h,k>0$ such that $q(i) < \frac{h}{k} < p(i)$, with $h$ and $k$ coprime.
By \cite[Corollary 2.8]{agu} there is a basic literal $L =(a_1,\ldots, a_{u})X_{i}$ such that
$\lambda_{L}^{-1}(0)$ is the set $[0,\frac{h}{k}] \times [0,1]^{n-1}$, and $\lambda_{L}$ is monotone increasing in the variable $X_i$.
Hence $\lambda_L(p) > 0$ and $\lambda_L(q) = 0$.
If $p(i) \leq q(i)$ for all integers $i \geq 1$, then one can choose $j$ such that $p(j) < q(j)$.
As before there are integers  $h,k>0$ such that $p(j) < \frac{h}{k} < q(j)$, with $h$ and $k$ coprime,
and there is a basic literal $R = (b_1,\ldots, b_{w})X_{i}$ such that $\lambda_{R}^{-1}(1)$ is the set $[\frac{h}{k},1] \times [0,1]^{n-1}$, and $\lambda_{R}$ is monotone increasing
in the variable $X_i$.
Hence $\lambda_{\neg R}(p) > 0$ and $\lambda_{\neg R}(q) = 0$.
\end{proof}
The proof is now completed by a routine translation of Claim \ref{c:separation} from terms to formul\ae\ of $\LL$.
\end{proof}
\begin{remark}In connection with Claim \ref{c:separation}, let us observe that term functions in {\L}ukasiewicz logic (even over an arbitrarily large set $I$ of propositional variables) enjoy an even stronger separation property. Recall (see \eg\ \cite[1.5]{engelking}) that a space is \emph{completely regular} if it is $T_{1}$, and points can be separated from closed sets by continuous $[0,1]$-valued functions. Now, \emph{in each product space $[0,1]^{I}$,  points can be separated from closed sets by term functions}.  Thus  the space of standard models $[0,1]^{I}$ may be described as \emph{definably completely regular}. The proof is essentially the same as the one above, \textit{mutatis mutandis}; \cf\ \cite[Lemma 3.5]{marraspada}.
\qed
\end{remark}

\paragraph{Proof of Theorem II}In light of Lemmata \ref{l:p2impliesL} and \ref{l:separation}, it remains to show that each real-valued extension of $\LL$ that is not $\LL$ itself satisfies \P{2}. By Lemmata \ref{l:extL} and \ref{l:inv}, we may safely assume that $\Log$ is induced by  a finite subalgebra $T_{*}$ of $T_{\odot}$.
By \cite[Proposition 3.6.5]{cdm}, each such subalgebra is isomorphic to $T_m = \left\{\frac{0}{m},\frac{1}{m},\ldots,\frac{m-1}{m},\frac{m}{m}\right\}$, for a uniquely determined integer $m\geq 1$.
Notice now that if $p\neq q$ are in $T_m^{n}$ then the term function $\lambda'_{\tau}$ obtained by restricting to $T_m^{n}$ the function $\lambda_{\tau} \colon [0,1]^{n}\to [0,1]$
provided by Claim \ref{c:separation} is such that $\lambda'_{\tau}(q) = 0$ while $\lambda'_{\tau}(p) > 0$.
Hence $\Log$ satisfies \P{2}, and the proof is complete. \qed\\

To conclude this section, let us discuss two alternative formulations of \P{2}.  We consider the following conditions, for every algebra  $T_*$ of truth values inducing $\Log$.
\princ{\P{2$'$}}{For each pair of valuations $\mu, \nu \colon \Form \to T_{*}$, if $\mu\neq \nu$ then
there is a formula $\alpha\in \Form$ such that $\mu(\alpha) <1$ while $\nu(\alpha) = 1$.\qed\\}
\princ{\P{2$''$}}{For each pair of valuations $\mu, \nu \colon \Form \to T_{*}$, if $\mu\neq \nu$ then
there is a formula $\alpha\in \Form$ such that $\mu(\alpha) =0$ while $\nu(\alpha) = 1$.\qed\\}
\begin{corollary}\label{cor:equiv}A real-valued logic satisfies \P{2} if, and only if, it satisfies \P{2$'$} if, and only if, it satisfies \P{2$''$}.
\end{corollary}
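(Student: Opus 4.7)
The plan is to reduce the corollary to Theorem II by showing that each of $\P{2'}$ and $\P{2''}$ is also equivalent to $\Log$ being an extension of $\LL$. Since Theorem II already handles $\P{2}$, what remains is: (a) the implication from $\P{2''}$ to both $\P{2}$ and $\P{2'}$, which is immediate; (b) that $\Log$ extends $\LL$ implies $\P{2''}$; and (c) that $\P{2'}$ forces $\Log$ to extend $\LL$. For (a), a formula $\alpha$ supplied by $\P{2''}$ for $\mu\neq\nu$ trivially satisfies $\mu(\alpha)=0<1$ and $\nu(\alpha)=1$, giving $\P{2'}$; and applying $\P{2''}$ to the swapped pair $(\nu,\mu)$ produces a formula with $\nu(\alpha)=0$ and $\mu(\alpha)=1>0$, giving $\P{2}$.

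For (b), fix $T_*$ inducing $\Log$. By Lemma \ref{l:extL} and the straightforward analogue of Lemma \ref{l:inv} for $\P{2''}$---which goes through verbatim, since isomorphisms preserve $0$ and $1$ alike---I may assume $T_*$ is a subalgebra of $T_\odot$. Given $\mu\neq\nu$, Theorem II applied to the swapped pair supplies $\beta_0\in\Form$ with $\nu(\beta_0)>0$ and $\mu(\beta_0)=0$. Let $a=\nu(\beta_0)\in(0,1]$, choose an integer $n\geq 1/a$, and set $\beta:=n\beta_0$, that is, $n$ copies of $\beta_0$ joined by the {\L}ukasiewicz strong disjunction $\oplus$ (as in the proof of Lemma \ref{l:separation}). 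Since $\oplus$ is interpreted on $T_\odot$ as $(x,y)\mapsto\min(1,x+y)$, we obtain $\mu(\beta)=0$ and $\nu(\beta)=\min(1,na)=1$, which is $\P{2''}$ for the pair.

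For (c), I argue by contraposition: if $\Log$ does not extend $\LL$, then $\P{2'}$ fails for some inducing algebra $T_*$. By Lemma \ref{l:extL}, $T_*$ is not isomorphic to a subalgebra of $T_\odot$, and by Lemma \ref{l:2summands} it splits as a non-trivial ordinal sum $\bigoplus_{i\in I} C_i$ with $|I|\geq 2$ and each $C_i$ containing at least two elements. Pick a summand $S=C_j$ distinct from the least-indexed one, and an element $v\in S$ with $v\neq 1$ (which exists because $|S|\geq 2$ and $1$, being the shared top of each summand, lies in $S$). Let $\mu$ and $\nu$ be the constant valuations sending every variable to $1$ and to $v$ respectively, so $\mu\neq\nu$. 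A routine induction on formul\ae\ shows $\mu(\alpha)\in\{0,1\}$ for every $\alpha$, using that the two-element subset $\{0,1\}$ is closed under every BL-operation; in particular, $\mu(\alpha)<1$ is equivalent to $\mu(\alpha)=0$. Since $\mu(\Var)=\{1\}\subseteq S$ and $\nu(\Var)=\{v\}\subseteq S$, Lemma \ref{l:varseval} gives $\mu(\alpha)=0\Longleftrightarrow\nu(\alpha)=0$. Combining the two equivalences, $\mu(\alpha)<1\Longleftrightarrow\nu(\alpha)=0$, which rules out any formula with $\mu(\alpha)<1$ and $\nu(\alpha)=1$; hence $\P{2'}$ fails for $(\mu,\nu)$.

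The main obstacle is step (c): extracting from the mere failure of $\LL$-extension a witness pair for $\P{2'}$. The trick is to replace one of the two same-summand elements used in the proof of Lemma \ref{l:p2impliesL} by the shared top $1$, so that the corresponding valuation collapses to the Boolean subalgebra $\{0,1\}$ and the separating condition $\mu(\alpha)<1$ degenerates into $\mu(\alpha)=0$; Lemma \ref{l:varseval} can then be invoked exactly as in the proof of Lemma \ref{l:p2impliesL} to transport this zero to the other valuation, denying the existence of a $\P{2'}$-separator.
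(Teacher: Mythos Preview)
Your proof is correct and follows the paper's route: reduce to Theorem~II, establish \P{2$''$} for extensions of $\LL$ by amplifying a \P{2}-separator inside (a subalgebra of) $T_\odot$---you use $\oplus$-powers where the paper first negates and then takes $\odot$-powers, a purely cosmetic difference---and refute \P{2$'$} for non-extensions via the constant-$1$ valuation together with Lemma~\ref{l:varseval}. One minor imprecision: Lemma~\ref{l:2summands} does not guarantee that \emph{each} summand $C_i$ has at least two elements, but your argument only needs one non-trivial summand beyond the first, and that is indeed guaranteed.
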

\begin{proof}
Let $T_*$ be an algebra of truth-values inducing the real-valued logic $\Log$.
It suffices to prove that if $\Log$ is an extension of $\LL$ then it satisfies $\P{2}'$ and $\P{2}''$,
and otherwise it fails both.

Assume first that $\Log$ is an extension of $\LL$.
Given valuations $\mu \neq \nu$ with values in $T_{*}$, by Theorem II there is a formula $\alpha$ be such that $\mu(\alpha) > 0$ and $\nu(\alpha) = 0$.
Then $\mu(\neg\alpha) < 1$ and $\nu(\neg\alpha) = 1$. Hence \P{2$'$} holds. We now show that \P{2$'$} implies \P{2$''$}.
In light of Remark \ref{r:autos}.2 and Lemma \ref{l:inv}, we may safely assume that $T_*$ is a subalgebra of $T_\odot$.
Then,
if $\mu(\alpha) < 1$ and $\nu(\alpha) = 1$, it is clear by the definition of $\odot$ that there exists an integer $k \geq 1$ such that
$\mu(\alpha^k) = 0$ and $\nu(\alpha^k) = 1$, where $\alpha^1 = \alpha$ and $\alpha^n = \alpha \odot \alpha^{n-1}$.

Assume now $T_*$ does not induce an extension of $\LL$.
By Theorem II,
there are distinct valuations $\mu$ and $\nu$ such that
$\nu(\alpha) = 0$ implies $\mu(\alpha) = 0$ for
any formula $\alpha$.
This suffices to show that \P{2}$''$ fails. 
For what concerns \P{2}$'$,
recall that, by Lemma \ref{l:extL} and Lemma \ref{l:2summands}, $T_*$ splits into a non-trivial ordinal sum of at least two summands.
Let $\mu$ be the valuation assigning $1$ to every variable.
Then it is easy to check that $\mu(\alpha) \in \{0,1\}$ for each formula $\alpha$.
Let $\nu$ be a valuation such that $\nu(\Var)$ is contained in a summand of $T_*$ distinct from the first one.
Then, by Lemma \ref{l:varseval}, for each formula $\alpha$ we have
$\nu(\alpha) = 0$ iff $\mu(\alpha) = 0$, and hence $\nu(\alpha) =1$ implies $\mu(\alpha) = 1$,
that is, \P{2}$'$ fails.
\end{proof}

\section{Product logic.}\label{s:product}

\begin{lemma}\label{l:noext}The only many-valued logic that extends \PP\ properly is classical logic.
\end{lemma}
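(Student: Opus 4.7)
The plan is to establish the following algebraic counterpart of the lemma: the variety of product algebras has exactly one non-trivial proper subvariety, namely the variety of Boolean algebras. By the algebraizability of \BL\ (and hence of \PP), consistent schematic extensions of \PP\ are in bijection with non-trivial subvarieties of the variety of product algebras, so this algebraic statement is equivalent to the lemma.

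First, I would recall that subdirectly irreducible BL-algebras are linearly ordered, and hence so are the subdirectly irreducible product algebras. By Lemma \ref{l:ordinalsumdecomposition} (Mostert-Shields) together with Remark \ref{r:autos}.3, each such \emph{product chain} is isomorphic to an ordinal sum $\{0,1\} \oplus \mathcal{C}$, where $\mathcal{C}$ is a (possibly trivial) totally ordered cancellative hoop; the degenerate choice $\mathcal{C}=\{1\}$ yields the two-element Boolean algebra.

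Second, I would show that whenever $\mathcal{C}$ is non-trivial, the chain $\{0,1\} \oplus \mathcal{C}$ generates the whole variety of product algebras. The crucial algebraic input is the classical fact that any non-trivial totally ordered abelian lattice-ordered group generates the entire variety of abelian $\ell$-groups. Translating through the standard negative-cone/exponential correspondence between abelian $\ell$-groups and cancellative hoops, any non-trivial totally ordered cancellative hoop generates the whole variety of cancellative hoops, and consequently any non-trivial product chain generates the whole variety of product algebras.

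Combining the two observations, a non-trivial proper subvariety of product algebras can contain only the Boolean algebra $\{0,1\}$ among its subdirectly irreducible members, so it must coincide with the variety of Boolean algebras. Translating back to logic yields the lemma. The main obstacle is the generation argument for cancellative hoops in the second step; while this is a classical result, a fully self-contained derivation requires some detailed work with abelian $\ell$-groups, and in the paper the step is most likely discharged by a direct reference to an established classification of the subvarieties of product algebras, such as the one by Cignoli and Torrens.
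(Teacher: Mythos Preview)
Your anticipation is exactly right: the paper's proof consists of the single sentence ``This is essentially \cite[Corollary 2.10]{cignoli_torrens}.'' Your outline is, in substance, a sketch of what that reference establishes, so you and the paper are aligned; you have simply unpacked the citation.

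One correction is in order. You justify the decomposition of an arbitrary product chain as $\{0,1\}\oplus\mathcal{C}$ by invoking Lemma~\ref{l:ordinalsumdecomposition} (Mostert--Shields) together with Remark~\ref{r:autos}.3. Both of those results, as formulated in the paper, speak only of \emph{standard} algebras on $[0,1]$ and their subalgebras; they say nothing about an arbitrary subdirectly irreducible product algebra, which need not embed in $[0,1]$. The decomposition you want is nonetheless true and comes straight from the product axiom $\neg\varphi \vee ((\varphi\to(\varphi\&\psi))\to\psi)$: in any product chain, $x>0$ forces $(x\to x*y)\to y=1$ for all $y$, hence (taking $y=0$) $\neg\neg x=1$ and so $\neg x=0$, and in general $x\to x*y=y$; this makes the non-zero elements a cancellative subhoop, and adjoining the isolated $0$ gives the ordinal-sum form. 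With that repair your generation step via abelian $\ell$-groups (and hence cancellative hoops, which form a minimal non-trivial variety) goes through, and this is precisely the route taken in Cignoli--Torrens.
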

\begin{proof}

This is essentially \cite[Corollary 2.10]{cignoli_torrens}.
\end{proof}
\begin{lemma}\label{l:fails}Product logic \PP\ fails both \P{1} and \P{2}.
\end{lemma}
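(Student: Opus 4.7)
The statement is a double application, in contrapositive form, of Lemmata \ref{l:maintarski} and \ref{l:p2impliesL}. The only ingredient not already developed in the paper is the verification that $\PP$ is neither an extension of $\G$ nor an extension of $\LL$, which in each case reduces to a one-variable calculation in $T_\times$.

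\textbf{Failure of P1.} The first step is to observe that $\PP$ does not extend $\G$, because the Product t-norm is not idempotent: evaluating the schema $\varphi \to (\varphi \& \varphi)$ reversed, namely $X_1 \& X_1 \to X_1$'s converse $X_1 \to X_1 \& X_1$, at $\mu(X_1) = 1/2$ yields distinct values $1/2 \neq 1/4$ in $T_\times$. Hence by Lemma \ref{l:iffsemantics} and the completeness of $\PP$ with respect to $T_\times$, $\nvdash_\PP X_1 \leftrightarrow X_1 \& X_1$. One may now either invoke the contrapositive of Lemma \ref{l:maintarski} directly, or explicitly exhibit $\alpha := X_1$ and $\beta := X_1 \& X_1$ as witnesses: as in the proof of Lemma \ref{l:maintarski}, $\mu(\alpha) = 1 \iff \mu(\beta) = 1$ holds for every valuation into any algebra of truth values (because t-norms satisfy $1 \ast 1 = 1$ and $a \ast a \leq a$), whereas $\not\vdash_\PP \alpha \leftrightarrow \beta$ by the above. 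This falsifies \P{1}.

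\textbf{Failure of P2.} The first step is to note that $\PP$ does not extend $\LL$, because $\neg\neg x = x$ fails in $T_\times$: for $x = 1/2$ one computes $\neg x = 0$ and then $\neg\neg x = 1 \neq x$. The contrapositive of Lemma \ref{l:p2impliesL} then immediately yields the failure of \P{2} for $\PP$. If one prefers an explicit witness, one can use the ordinal-sum decomposition $T_\times \cong \{0,1\} \oplus \mathcal{C}$, with $\mathcal{C} = ((0,1],\times,\to_\times,1)$ the standard cancellative hoop (as recorded in the proof of Lemma \ref{l:2summands}). Pick any two distinct $a, b \in (0,1]$ and let $\mu, \nu \colon \Form \to T_\times$ be the unique valuations sending every propositional variable to $a$ and $b$, respectively. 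Then $\mu \neq \nu$, while Lemma \ref{l:varseval} applied with $S = (0,1]$ gives $\mu(\alpha) = 0 \iff \nu(\alpha) = 0$ for every $\alpha \in \Form$, contradicting \P{2}.

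\textbf{Obstacles.} There are essentially none: all the heavy lifting has been done in Sections \ref{s:tarski} and \ref{s:leibniz}. The only point requiring a moment of care is writing down, once and for all, the non-idempotency and non-involutivity witnesses in $T_\times$; beyond that the conclusion follows by direct appeal to the previously established lemmata.
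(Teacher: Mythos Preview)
Your proposal is correct and takes essentially the same approach as the paper. For \P{1} the paper uses exactly your explicit witness $\alpha = X_1$, $\beta = X_1 \& X_1$ in $T_\times$; for \P{2} it uses exactly your alternative via the decomposition $T_\times \cong \{0,1\} \oplus \mathcal{C}$ together with Lemma~\ref{l:varseval} (after reducing to $T_\times$ via Remark~\ref{r:autos}.3 and Lemma~\ref{l:inv}). Your first-listed route---invoking the contrapositives of Lemmata~\ref{l:maintarski} and~\ref{l:p2impliesL}---is a legitimate shortcut, but note that those lemmata were themselves proved using precisely these same witnesses, so no genuinely different idea is involved.
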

\begin{proof} (\P{1}) \, Choose the standard product algebra $T_\times$ to induce \PP.  It follows directly from the definition of  t-norm that $\mu(X_1)=1$ if, and only if, $\mu(X_1\&X_1)=1$, for any valuation $\mu\colon\Form\to T_\times$. To see that \P{1} fails,  it thus suffices to observe that $\not \vdash_{\PP} X_1 \leftrightarrow X_1 \& X_1$: for else, by soundness and Lemma \ref{l:iffsemantics}, we would have $\mu(X_1\&X_1)=\mu(X_1)\mu(X_1)=\mu(X_1)$ whatever $\mu$ is; this is a contradiction.

\smallskip \noindent (\P{2}) \,
By Remark \ref{r:autos}.3 and Lemma \ref{l:inv}, it suffices to argue about the product t-norm $T_\times$. By direct inspection, we have the decomposition $T_\times = \{0,1\} \oplus \mathcal{C}$, where $\mathcal{C}$ is the standard cancellative hoop. The hypotheses of Lemma \ref{l:varseval} are therefore satisfied, and hence
 \P{2} fails for any two valuations $\mu\neq \nu \colon \Form \to T_\times$ such that $\mu(\Var),\nu(\Var)\subseteq \mathcal{C}$.
\end{proof}
\begin{lemma}\label{l:justproduct}Let $\Log$ be a closed real-valued logic all of whose non-classical, real-valued extensions fail \P{1} and \P{2}. Then $\Log=\PP$.
 \end{lemma}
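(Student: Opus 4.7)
The plan is to show that the closed algebra of truth values $T^*$ inducing $\Log$ must be isomorphic to a non-trivial subalgebra of $T_\times$; combined with Lemma~\ref{l:noext} and the non-classicality of $\Log$, this will force $\Log=\PP$. As a preliminary, applying the hypothesis to $\Log$ itself gives that $\Log$ fails both \P{1} and \P{2}; by Theorems I and II, $\Log$ extends neither $\G$ nor $\LL$, while non-classicality follows since classical logic satisfies \P{1}. Fix an ambient BL-algebra $([0,1],*,\to_*,0)$ of which $T^*$ is a closed subalgebra, with $*$ a continuous t-norm.

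The first intermediate claim is that $T^*$ contains no idempotent strictly between $0$ and $1$. Indeed, if $e\in T^*$ is idempotent with $0<e<1$, then the three-element set $\{0,e,1\}$ is a subalgebra of $T^*$: closure under $*$ is immediate from $e*e=e$, while closure under $\to$ follows from the identity $e\to 0=0$, valid for any positive idempotent in a BL-chain (idempotents of a continuous t-norm act as infima, so $e\to 0=0$ whenever $e>0$). This subalgebra is isomorphic to the three-element G\"odel chain, hence by Lemma~\ref{l:mintnorm} it induces a proper extension of $\G$; by Theorem I this is a non-classical real-valued extension of $\Log$ satisfying \P{1}, contradicting the hypothesis.

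The second step combines this with the Mostert-Shields decomposition (Lemma~\ref{l:ordinalsumdecomposition}) and the closedness of $T^*$. Pick any $y\in T^*\cap(0,1)$, which exists by non-classicality of $\Log$, and let $[a,b]\subseteq[0,1]$ be the interval of the ambient summand $B$ containing $y$, with $a,b$ idempotents of $*$. The case that $B$ is of minimum type is ruled out at once, since it makes $y$ itself idempotent. In the {\L}ukasiewicz case, the iterates $y^n:=y*\cdots*y$ reach $a$ in finitely many steps; in the product case they converge to $a$ asymptotically. In either non-trivial case, $y^n\in T^*$ for every $n$ by closure of $T^*$ under $*$, and closedness of $T^*$ in $[0,1]$ then yields $a\in T^*$. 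The first claim demands $a=0$. Consequently every non-trivial element of $T^*$ lies in the unique ambient summand anchored at $0$, which must be of {\L}ukasiewicz or product type. Rescaling the interval $[0,b]$ to $[0,1]$ (and sending the global $1$ of $T^*$ to $1$) exhibits $T^*$ as isomorphic to a subalgebra of $T_\odot$ or $T_\times$. The {\L}ukasiewicz alternative is excluded by Lemma~\ref{l:extL}, so $T^*$ is isomorphic to a subalgebra of $T_\times$ with more than two elements, and the conclusion $\Log=\PP$ follows from Lemma~\ref{l:noext}.

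The main obstacle will be implementing the iteration-and-closedness argument rigorously --- in particular, matching the abstract Mostert-Shields picture with the geometric placement of elements in $[0,1]$, verifying the iteration behaviour explicitly in the {\L}ukasiewicz and product summand types, and handling the rescaling at the end so that it yields a genuine subalgebra of $T_\times$ rather than merely an abstract BL-chain of the correct form.
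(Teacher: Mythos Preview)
Your argument is correct and follows the same broad arc as the paper's: rule out $\G$ and $\LL$ via Theorems I and II, show that any idempotent in $T^*\cap(0,1)$ would yield a three-element G\"odel subalgebra (hence a non-classical real-valued extension satisfying \P{1}), and conclude that $T^*$ embeds in $T_\times$. The difference lies in how the absence of interior idempotents is exploited. The paper first disposes of finite $T_*$ via Lemma~\ref{l:finiteordsum}, and for infinite $T_*$ applies Lemma~\ref{l:subalgebraofordinalsum} to write $T_*$ itself as an ordinal sum $\bigoplus_{i\in I}C_i$, arguing that $|I|>1$ forces an interior idempotent; closedness enters only implicitly, to guarantee that the non-first summands are bounded below. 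Your iteration argument is more direct and makes the role of the topological hypothesis explicit: the powers $y^n$ of any non-idempotent $y\in T^*$ descend to the bottom $a$ of its ambient Mostert--Shields summand, and closedness of $T^*$ in $[0,1]$ places $a\in T^*$, whence $a=0$. This dispenses with the finite/infinite case split and with Lemma~\ref{l:subalgebraofordinalsum}. The rescaling you flag as an obstacle is routine once one notes that the global $1$ of $T^*$ is neutral for $*$ and satisfies $1\to x=x$, so sending it to $1\in T_\times$ (or $T_\odot$) is compatible with all operations regardless of whether the first ambient summand already reaches $1$. For the final step the paper cites \cite[Corollary~2.9]{cignoli_torrens} directly, whereas you go through Lemma~\ref{l:noext}; either suffices.
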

\begin{proof} We know that $\Log$ is not an extension of $\G$ or  $\LL$, by Theorems I and II. Let $T_*$ be any algebra of truth values inducing $\Log$. We will show that $T_*$ cannnot be finite, to begin with.

If $T_*$ is finite,
by Lemma \ref{l:finiteordsum} we know that $T_*$ splits into an ordinal sum of finitely many finite MV-chains. If there is just one summand, then $\Log$ is an extension of $\LL$, and this is a contradiction. If there is more than one summand then, by the  definition of ordinal sum, and using the fact that each summand is bounded below by $0$,  there is
an idempotent element $0,1\neq e \in T_*$. The subset $G_3:=\{0,e,1\}\subseteq T_*$ is closed under the BL-algebraic operations, as is checked easily, and all of its elements
are idempotent. Hence $G_3$ is isomorphic to the three-element G\"odel algebra. Now consider the collection $\mathscr{E}$ of formul\ae\ that evaluate to $1$ under each valuation into $G_3$. Obviously $\mathscr{E}\supseteq \Log$, and $\mathscr{E}$ is closed under substitutions by its very definition.  Hence $\mathscr{E}$ is a real-valued extension of $\Log$ which by construction is three-valued G\"odel logic.  Theorem I implies that $\mathscr{E}$  satisfies \P{1}, and we have reached a contradiction.

We may therefore suppose that $T_*$ has an infinite closed subset of $[0,1]$ as its support. By definition, $T_{*}$ extends to a BL-algebra $([0,1],*',\to_{*'},0)$.
By Lemmata \ref{l:ordinalsumdecomposition} and \ref{l:subalgebraofordinalsum}, $T_*$ decomposes into an ordinal sum $\bigoplus_{i \in I} C_i$, where each summand $C_i$ is
isomorphic to  a subalgebra of one amongst $T_\odot$, $T_{\min}$, and  $T_\times$. If the index set $I$ has more than one element, then using again the fact that each summand $C_i$
is bounded below by $0$, we have an idempotent element $0,1\neq e \in T_*$, and hence $\{0,e,1\}$ is a three-element  G\"odel subalgebra of $T_*$.
We then reason as above to conclude that $\Log$ has three-valued G\"odel logic as an extension,  reaching a contradiction.
Hence $I$ is a singleton, that is, $T_*$ is isomorphic to a subalgebra of $T_\odot$, $T_{\min}$, and $T_\times$. Using Remark \ref{r:autos},
and Theorems I and II, $T_*$ cannot be isomorphic to a subalgebra of $T_\odot$ --- because it fails \P{2} --- nor can it be isomorphic to a subalgebra of $T_{\min}$
--- because it fails \P{1}. Then $T_*$ is isomorphic to an infinite subalgebra of $T_\times$, and hence $\Log=\PP$, by \cite[Corollary 2.9]{cignoli_torrens}.
\end{proof}
\paragraph{Proof of Theorem III}  Lemmata \ref{l:noext}, \ref{l:fails} and \ref{l:justproduct}. \qed
\begin{remark}\label{rem:fails}
Theorem III fails if we drop the assumption that $\Log$ be closed. Indeed, consider the logic $\Log$ induced by $\{0,1\} \oplus  \mathcal{C}\oplus \mathcal{C}$, where $\mathcal{C}$ is the standard cancellative hoop (see the proof of Lemma \ref{l:2summands}).
Then it can be verified that $\Log$ is not closed, that $\Log$ is not $\PP$,  and that all of its non-classical,
real-valued extensions fail \P{1} and \P{2}. \qed
\end{remark}

\section{Epilogue.} Let us return to H\'ajek's Programme, as  embodied in \cite{hajek}.  According to H\'ajek, a real-valued logic may be considered as a ``{\sl logic of imprecise \textup{(}vague\textup{)} propositions}'' \cite[p.vii]{hajek}, wherein  ``{\sl truth \textup{[}\ldots\textup{]} is a matter of degree}'' \cite[p.2]{hajek}. Classical logic may be viewed as a limiting case, where only two degrees of truth, $0$ and $1$, exist. But as soon as a logic is genuinely real-valued, it must renounce  at least one of the familiar features  \P{1} and \P{2} of the classical world. We record this fact as a formal statement, by way of conclusion.
\begin{corollarynonum}A real-valued logic $\Log$ satisfies $\P{1}$ and $\P{2}$ if, and only if, $\Log$ is classical logic if, and only if, $T_*=\{0,1\}$ is the unique  algebra of truth values that induces $\Log$.
\end{corollarynonum}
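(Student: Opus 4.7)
The plan is to pivot on Theorems I and II and observe that the only extension of $\BL$ extending both G\"odel and {\L}ukasiewicz logic is classical logic, after which the uniqueness of the inducing algebra of truth values follows immediately.

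First, I would assume $\Log$ satisfies both \P{1} and \P{2} and show $\Log$ is classical. By Theorem I, $\Log$ extends $\G$; by Theorem II, $\Log$ extends $\LL$. Pick any algebra of truth values $T_*$ inducing $\Log$. Lemma \ref{l:mintnorm} forces $T_*$ to be a subalgebra of $T_{\min}$, so its t-norm is $\min$ and its residual negation is G\"odel negation, sending every non-zero element to $0$ and $0$ to $1$. On the other hand, Lemma \ref{l:extL} gives that $T_*$ is isomorphic to a subalgebra of $T_\odot$; in particular $T_*$ is an MV-algebra and satisfies $\neg\neg a = a$ for every $a \in T_*$. For $a \in T_* \setminus \{0,1\}$ we would then compute $\neg\neg a = \neg 0 = 1 \neq a$, a contradiction. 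Hence $T_* = \{0,1\}$, so $\Log$ is the logic of the two-element Boolean algebra, \ie\ classical logic.

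The same argument, now applied under the assumption that $\Log$ is classical, shows that every inducing algebra of truth values collapses to $\{0,1\}$; so $\{0,1\}$ is the unique such algebra. Conversely, if $\{0,1\}$ induces $\Log$, then by definition $\Log$ is exactly the set of formulae evaluating to $1$ under every two-valued valuation, \ie\ classical logic. Finally, since $\{0,1\}$ is a subalgebra of both $T_{\min}$ and $T_\odot$, Lemmata \ref{l:mintnorm} and \ref{l:extL} yield that classical logic is simultaneously an extension of $\G$ and of $\LL$, whence by Theorems I and II it satisfies \P{1} and \P{2}, closing the circle.

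No real obstacle arises; the conceptual work has already been absorbed into Theorems I and II, and this epilogue corollary amounts to the remark that the two semantic conditions supplied by Lemmata \ref{l:mintnorm} and \ref{l:extL} can coexist only on the trivial two-element algebra.
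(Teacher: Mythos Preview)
Your argument is correct, and it pivots on Theorems I and II just as the paper does; the difference lies in how you extract ``classical'' from ``extends both $\G$ and $\LL$''. The paper simply cites \cite[Theorem 4.3.9.(1)]{hajek} for the fact that classical logic is the only common extension of $\G$ and $\LL$, and then handles the second equivalence by a separate direct argument: if $T_*$ contains some $a\in(0,1)$, one checks $\max\{a,\,a\to_* 0\}<1$, so $X_1\vee\neg X_1$ fails and the logic is not classical. You instead stay entirely within the paper's own lemmata: Lemma~\ref{l:mintnorm} forces $T_*\subseteq T_{\min}$ (so negation is G\"odel negation), Lemma~\ref{l:extL} forces $T_*$ to be an MV-algebra (so $\neg\neg a=a$), and these are jointly compatible only on $\{0,1\}$. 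Your route is more self-contained---it avoids both the external citation and the \textit{tertium non datur} computation---and it handles the two equivalences uniformly with a single argument, at the modest cost of invoking the structural Lemmata \ref{l:mintnorm} and \ref{l:extL} rather than a one-line semantic check.
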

\begin{proof}That $\Log$ is classical logic just in case $\Log$ satisfies $\P{1}$ and $\P{2}$ follows from Theorems I--II upon observing that the only common  extension of $\G$ and $\LL$ is classical logic, by \cite[Theorem 4.3.9.(1)]{hajek}. It thus remains to show that $\Log$ is classical logic if, and only if, $T_*=\{0,1\}$ as soon as $T_*$ induces $\Log$.
By the very definition of t-norm,  $T_* = \{0,1\}$ induces classical logic. On the other hand,  if there exists $a \in T_* \setminus \{0,1\}$ then  $\max{\{a, a \to_{*} 0\}}< 1$. Indeed, $a \to_{*} 0 = 1$ would entail $a * 1 = 0$ for $a > 0$, which is impossible. Any valuation $\mu\colon \Form \to T_{*}$ that sends $X_{1}$ to $a$ is therefore such that $\mu(X_{1}\vee\neg X_{1})<1$, and the logic induced by $T_*$ cannot be classical.
\end{proof}

\section*{Acknowledgements.}
\noindent We are grateful to two anonymous referees for several remarks on an earlier version of this paper that led to improvements in exposition, and to shorter proofs of some of the results given here.

\bibliographystyle{plain}

\end{document}